
\documentclass[11pt]{article}
\usepackage{amsthm}
\usepackage{amsfonts}
\usepackage{amsmath}
\usepackage{colonequals}
\usepackage{epsfig}
\usepackage{url}
\usepackage{enumitem}

\newtheorem{theorem}{Theorem}
\newtheorem{corollary}[theorem]{Corollary}
\newtheorem{lemma}[theorem]{Lemma}

\newcommand{\col}{\mbox{col}}

\title{\textbf{Planar graphs have two-coloring number at most
    $8$}\thanks{Supported by project GA14-19503S (Graph Colouring and
    Structure) of the Czech Science Foundation.}}
\author{Zden\v{e}k Dvo\v{r}\'ak\thanks{Computer Science Institute, Charles University, Prague, Czech Republic. E-mail: {\tt rakdver@iuuk.mff.cuni.cz}.}
\and Adam Kabela\thanks{Department of Mathematics, University of West Bohemia, Pilsen, Czech Republic. E-mail: {\tt kabela@ntis.zcu.cz}.}
\and Tom\'{a}\v{s} Kaiser\thanks{Department of Mathematics, Institute for Theoretical Computer Science
(CE-ITI), and European Centre of Excellence NTIS (New Technologies for
the Information Society), University of West Bohemia, Pilsen, Czech Republic. E-mail: {\tt kaisert@kma.zcu.cz}.}}
\date{}

\begin{document}
\maketitle

\begin{abstract}
  We prove that the two-coloring number of any planar graph is at
  most $8$. This resolves a question of Kierstead et al. [SIAM
  J.~Discrete Math.~23 (2009), 1548--1560]. The result is optimal.
\end{abstract}


\section{Introduction}
\label{sec:introduction}

We study the two-coloring number of graphs. This parameter was
introduced by Chen and Schelp~\cite{arr2} under the name of
$p$-arrangeability; they related it to the Ramsey numbers of graphs
and the Burr--Erd\H{o}s conjecture~\cite{Burr1975}. It was
subsequently found to be related to coloring properties of graphs,
such as the game chromatic number, the acyclic chromatic number or the
degenerate chromatic number (see~\cite{Kierstead2009} and the
references therein).

We now recall the definition of the two-coloring number. Let $G$ be a
graph and let $\prec$ be a linear ordering of its vertices. (In this
paper, graphs are allowed to have parallel edges, but not loops.) For
a vertex $v\in V(G)$, let $L_{G,\prec}(v)$ be the set consisting of
the vertices $u\in V(G)$ such that $u\prec v$ and either
\begin{itemize}
\item $uv\in E(G)$, or
\item $u$ and $v$ have a common neighbor $w\in V(G)$ such that $v\prec w$.
\end{itemize}
We say that an ordering $\prec$ is \emph{$d$-two-degenerate} if
$|L_{G,\prec}(v)|\le d$ for every $v\in V(G)$.  The \emph{two-coloring
  number} $\col_2(G)$ of $G$ is defined as $d+1$ for the smallest
integer $d$ such that there exists a $d$-two-degenerate ordering of
the vertices of $G$.

Already in~\cite{arr2}, the two-coloring number of planar graphs
was bounded by an absolute constant, namely $761$. The bound was
improved to $10$ in~\cite{arr3} and eventually to $9$
in~\cite{Kierstead2009}. On the other hand, a planar graph with
two-coloring number equal to $8$ was constructed
in~\cite{arr3}. Kierstead et al.~\cite{Kierstead2009} found
simpler examples yielding the same lower bound (namely, any
$5$-connected triangulation in which the degree $5$ vertices are
non-adjacent has this property) and asked whether the two-coloring
number of all planar graphs is bounded by $8$.

We answer this question in the affirmative:
\begin{theorem}\label{thm-main}
  The two-coloring number of any planar graph is at most $8$.
\end{theorem}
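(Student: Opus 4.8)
The plan is to reduce to triangulations and then argue by a minimal counterexample, eliminating low-degree vertices one at a time. First I would record that $\col_2$ is monotone under subgraphs: given an optimal ordering of a graph $H$, its restriction to any subgraph $G$ satisfies $L_{G,\prec}(v)\subseteq L_{H,\prec}(v)$ for every $v$, since each witnessing edge or length-two path of $G$ is also one in $H$. As every planar graph is a spanning subgraph of a planar triangulation, it suffices to prove the bound for triangulations. So suppose for contradiction that $G$ is a triangulation with $\col_2(G)\ge 9$ on the fewest vertices. Then $G$ has minimum degree at least $3$, and—crucially—Euler's formula guarantees a vertex of degree at most $5$. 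Thus it is enough to show that every vertex of degree at most $5$ is \emph{reducible}, i.e.\ its presence contradicts minimality.

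The reduction mechanism I would use is delete-and-reinsert. Given $v$ with $\deg(v)=k\le 5$, delete it; its link is a $k$-cycle bounding a $k$-gonal face, which I re-triangulate by adding chords (permitting parallel edges, as the paper does) to obtain a triangulation $G'$ on one fewer vertex. By minimality $G'$ admits an ordering $\prec$ with $|L_{G',\prec}|\le 7$; discarding the added chords only shrinks the sets $L$, so this ordering works for $G-v$ as well. It remains to splice $v$ back in. The key observation is the effect of placing $v$ at the very top of $\prec$: then $L(v)=N(v)$ has size $k\le 5\le 7$, while the only way $v$ can enlarge some other set $L(q)$ is by acting as a \emph{later} common neighbor—so $p$ enters $L(q)$ exactly when $\{p,q\}\subseteq N(v)$, $p\prec q$, and $pq\notin E(G)$.

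For $k=3$ this is immediate: the link is a triangle, there are no non-adjacent pairs in $N(v)$, and top-insertion changes nothing for the other vertices while giving $|L(v)|=3$. Hence a minimal counterexample has minimum degree at least $4$. For $k=4,5$ the link is a $4$- or $5$-cycle, which does contain non-adjacent pairs, so top-insertion can push a neighbor that already had $|L|=7$ up to $8$. To repair this I would not always insert at the top but choose the position of $v$ among its neighbors so as to trade the two kinds of contributions against each other: lowering $v$ below a neighbor $q$ removes all second-order increases that $v$ would inflict on $q$ (since $v$ can no longer be a later common neighbor of $q$), at the cost of adding $v$ itself to $L(q)$ and raising $|L(v)|$; the slack $|L_{G'}(q)|\le 7$ together with the sparsity of non-adjacent pairs on a short cycle should make some position admissible.

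The main obstacle is the degree-$5$ case, which is exactly where the bound is tight (the extremal examples are $5$-connected triangulations whose degree-$5$ vertices are pairwise non-adjacent). Here $N(v)$ is a $5$-cycle with up to five non-adjacent pairs to absorb, and a single neighbor may simultaneously be forced near the threshold by several of them, so no insertion position for a fixed $v$ need work. I expect the real content of the proof to lie here: either a more refined local reconfiguration of the ordering in the neighbourhood of $v$, or—more likely—a global discharging argument that uses the distribution of degrees and the available slack to guarantee that at least one vertex of degree at most $5$ (or a slightly larger configuration) is always reducible. Carrying out that discharging, and seeing why the constant cannot be pushed below $8$ on account of the degree-$5$ links, is where I anticipate essentially all of the difficulty.
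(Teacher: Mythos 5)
There is a genuine gap, and you have located it yourself: the degree-$4$ and degree-$5$ cases are not proved but only conjectured to follow from ``a more refined local reconfiguration'' or ``a global discharging argument.'' That unproved part is not a residual detail --- it is essentially the whole theorem. Your setup (monotonicity of $\col_2$ under subgraphs, reduction to triangulations, minimal counterexample, Euler's formula, the degree-$3$ reduction by top-insertion) is correct but routine, and the single-vertex delete-and-reinsert mechanism you build on is exactly what breaks down: for a degree-$5$ vertex $v$, re-inserting $v$ anywhere can force a neighbor already at $|L|=7$ over the threshold, and since the deleted-and-retriangulated graph $G'$ gives you no control over \emph{which} neighbors of $v$ are saturated, no choice of insertion position for a fixed $v$, nor any choice of which degree-$5$ vertex to delete, is guaranteed to succeed. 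An induction that removes one vertex at a time and keeps the same statement has no room to maneuver here.

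The paper escapes this by changing the induction hypothesis, not just the reducible configuration. It proves the stronger relative statement (Theorem~\ref{thm-maingen}): for a plane graph $G$, a set $K$ of at most three outer vertices, and a set $C$ in which every vertex has at most $4$ neighbors outside $C$, there is an ordering of $V(G)\setminus C$ that is $7$-two-degenerate \emph{relatively to} $C$ (common neighbors inside $C$ count against the bound no matter where they would sit). The set $C$ serves as a receptacle: instead of deleting a vertex and re-inserting it, one moves a whole configuration into $C$, applies minimality, and then appends the configuration at the end of the ordering. Crucially, the reducible configurations are not single vertices but \emph{paths and cycles} of low-degree vertices --- a path of $(6,0)$-vertices capped by $(5,\le\!1)$- and $(5,\le\!2)$-vertices (Lemmas~\ref{lemma-redupath}, \ref{lemma-reducyc}, \ref{lemma-redu61path}); such a path qualifies for insertion into $C$ precisely because each of its vertices has at most $4$ neighbors outside $C\cup V(P)$, and ordering its vertices last, in path order, keeps every $|L|$ at most $7$ by the ``friends'' bookkeeping of Lemma~\ref{lemma-omfr}. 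The discharging argument (rules R1--R3, including the long-range rule R3 that sends charge along fans of $(6,0)$-vertices) then shows that a minimal counterexample must contain one of these path/cycle configurations, a contradiction. So your instinct that discharging is needed is right, but the discharging cannot be grafted onto your framework: it requires both the relative strengthening with $C$ and $K$ and the multi-vertex reducible configurations, neither of which appears in your proposal.
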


It was observed in~\cite{Kierstead2009} that the list-degenerate
chromatic number of a graph $G$ is bounded by the two-coloring number
of $G$. By this observation, Theorem~\ref{thm-main} improves the known
upper bound for the list-degenerate chromatic number of planar graphs,
as well as for the ordinary degenerate chromatic number of planar
graphs, to $8$.

The structure of this paper is as follows. In the remainder of this
section, we formulate a more general version of Theorem~\ref{thm-main}
that is better suited for an inductive proof
(Theorem~\ref{thm-maingen} below). Section~\ref{sec:structural}
focuses on the basic structural properties of a hypothetical minimal
counterexample. These properties are used in
Section~\ref{sec:discharging} in a discharging procedure that provides
a contradiction, establishing Theorem~\ref{thm-maingen} and hence also
Theorem~\ref{thm-main}.

It will be useful to consider the following relative version of the
notion of $d$-two-degenerate ordering. Let $G$ be a graph, let $C$
be a subset of its vertices and let $\prec$ be a linear ordering of
$V(G)\setminus C$.  For a vertex $v\in V(G)\setminus C$, let
$L_{G,C,\prec}(v)$ be the set consisting of the vertices $u\in
V(G)\setminus C$ such that $u\prec v$ and either
\begin{itemize}
\item $uv\in E(G)$, or
\item $u$ and $v$ have a common neighbor $w\in V(G)\setminus C$ such
  that $v\prec w$, or
\item $u$ and $v$ have a common neighbor $w\in C$.
\end{itemize}
We say that an ordering $\prec$ is \emph{$d$-two-degenerate relative
  to $C$} if $|L_{G,C,\prec}(v)|\le d$ for every $v\in V(G)\setminus
C$.

\begin{theorem}\label{thm-maingen}
  Let $G$ be a plane graph and let $K$ be a set of at most three
  vertices incident with the outer face of $G$.  Let $C$ be a subset
  of $V(G)$ disjoint from $K$ such that every vertex of $C$ has at
  most $4$ neighbors in $V(G)\setminus C$.  There exists an ordering
  $\prec$ of $V(G)\setminus C$ that is $7$-two-degenerate relative
  to $C$, such that $u\prec v$ for every $u\in K$ and $v\in
  V(G)\setminus (C\cup K)$.
\end{theorem}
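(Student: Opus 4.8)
The plan is to prove Theorem~\ref{thm-maingen} by induction on $|V(G)\setminus C|$, constructing the ordering $\prec$ from the bottom up: I will repeatedly identify a vertex that can safely be placed next (i.e.\ as small as possible among the yet-unordered vertices), delete it, and recurse. The quantity $L_{G,C,\prec}(v)$ counts, for each $v$, its earlier neighbors together with earlier vertices sharing a later common neighbor (in $V(G)\setminus C$) or any common neighbor in $C$. Since the constraint is that each vertex sees at most $7$ such predecessors, the natural strategy is to place vertices of small degree early, so that when a vertex $v$ is eventually inserted, few of its neighbors remain to generate ``common neighbor'' relations. The set $K$ of at most three outer-face vertices must come first, which is exactly the kind of boundary condition an inductive argument on plane subgraphs can carry.

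First I would set up the induction so that the hypothesis applies to the graph obtained after placing the first few vertices. Concretely, I expect to select an unordered vertex $v \in V(G)\setminus(C\cup K)$ that is in some sense ``low'' with respect to the planar structure---for instance a vertex of small degree in the subgraph induced on the currently unordered vertices, chosen to respect the requirement that $K$ precedes everything else. Having chosen $v$ to be the next (smallest remaining) vertex, I would verify directly that $|L_{G,C,\prec}(v)| \le 7$: the neighbors $u \prec v$ contribute, and crucially every $u$ counted via a later common neighbor $w$ or a common neighbor in $C$ must be controlled using planarity together with the degree bound on $C$ (each vertex of $C$ has at most $4$ neighbors outside $C$). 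Then I would delete $v$, possibly move some of its neighbors into an enlarged set $C'$ or adjust $K$, check that the hypotheses of Theorem~\ref{thm-maingen} still hold for the smaller instance, and apply induction to order the rest.

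The main obstacle, I expect, is the bookkeeping that keeps the inductive hypothesis applicable after each deletion---in particular maintaining the invariant that $|K|\le 3$, that $K$ lies on the outer face, and that every vertex placed into $C$ retains at most $4$ neighbors outside $C$. Absorbing an ordered vertex into $C$ is attractive because it converts its future ``common neighbor'' contributions into the third (cheaper) case of the definition, but it is only legal when the degree-$4$ condition survives; reconciling this with planarity and with the bound of $7$ is where the real combinatorial work lies. This is presumably why the statement is set up for an inductive proof and why the heavy lifting is deferred to the structural analysis of a minimal counterexample in Section~\ref{sec:structural} and the discharging argument in Section~\ref{sec:discharging}: rather than greedily building the order, the authors most likely argue by contradiction, assume a minimal counterexample to Theorem~\ref{thm-maingen}, derive strong local structural constraints (forbidden configurations) from minimality, and then use a discharging scheme on the plane graph to show no such counterexample can exist. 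I would therefore pivot to that contradiction-plus-discharging framework: establish reducible configurations showing a minimal counterexample cannot contain certain low-degree arrangements, assign charges to vertices and faces summing to a negative total by Euler's formula, and redistribute them via discharging rules so that every element ends nonnegative, yielding the desired contradiction.
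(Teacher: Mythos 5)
Your proposal correctly guesses the paper's framework (minimal counterexample plus discharging), but it is a plan rather than a proof: every piece of mathematical content that makes that framework work is missing. Concretely, you would need (i) a notion of minimality under which the reductions are actually legal --- the paper uses the lexicographic measure $s(G,K,C)=(n,-c,e_C,q,-t,e)$, whose design is delicate precisely because the reductions do not just delete vertices but also \emph{add} vertices to $C$, add edges, and suppress multi-edges, and each such operation must decrease the measure; (ii) the actual reducible configurations. The paper first shows a minimal counterexample is a near-triangulation in which $C$ is independent with all degrees $4$, that internal $(a,b)$-vertices satisfy $a\ge 4$ and $(a,b)\neq(4,\le 3)$, that separating triangles are essentially absent, and then --- this is the heart of the matter --- that there is no path or cycle of internal $(6,0)$-vertices joining a $(5,\le\!1)$-vertex to a $(5,\le\!2)$-vertex (and a $(6,1)$-variant). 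These reducibility proofs are not generic: they work by absorbing the whole path into $C$, appending its vertices at the very end of the ordering in a carefully chosen order, and counting ``friends'' via each neighbor (the paper's Lemma~\ref{lemma-omfr}); your sketch of ``place low-degree vertices early'' points in the opposite direction and would not reproduce them.

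The second, equally essential missing piece is the discharging scheme itself. You say ``assign charges to vertices and faces summing to a negative total by Euler's formula and redistribute so every element ends nonnegative,'' but the bound $8$ (rather than the previously known $9$) hinges on a non-local rule: in the paper, charge travels arbitrarily far along fans $v_1xv_2, v_2xv_3,\ldots$ of $(6,0)$-vertices around a common apex $x$ from a big vertex to a distant $(5,\le\!1)$-vertex (rule R3), and the verification that $(7,0)$- and $(6,1)$-vertices do not go negative requires the ``heavy edge'' analysis of Lemma~\ref{lemma-depart}, which in turn feeds back into the forbidden-path lemmas. Also note two technical points your outline gets wrong or elides: the paper's charges live on vertices only (faces carry none, since the graph is forced to be a near-triangulation, giving $|E(G)|=3|V(G)|-3-|K|$), and the final contradiction is not ``everything ends nonnegative, contradicting a negative total'' in the naive sense --- the vertices of $K$ may end very negative, and one must bound their deficit by $-30$ each and compare $-30|K|$ against the exact total $-60-20|K|$. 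Without the measure, the specific reducible configurations, the long-distance rule R3, and these verifications, the proposal does not constitute a proof.
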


Note that Theorem~\ref{thm-main} follows from
Theorem~\ref{thm-maingen} by setting $C=K=\emptyset$.


\section{Basic properties of a minimal counterexample}
\label{sec:structural}

Before we embark on the study of the properties of a minimal
counterexample to Theorem~\ref{thm-maingen}, let us define the notion of minimality more precisely.

A \emph{target} is a triple $(G,K,C)$, where $G$ is a plane graph, $K$ is the set of all vertices incident with the outer face of $G$, $2\le |K|\le 3$, and $C$ is a subset of $V(G)$ disjoint from $K$
such that every vertex of $C$ has at most $4$ neighbors in $V(G)\setminus C$.
Note that it suffices to show that Theorem~\ref{thm-maingen} holds for every target, since if $|K|\le 1$, then we can add $2-|K|$ new isolated vertices into the outer face of $G$ and include them in $K$,
and we can add edges between the vertices of $K$ to ensure that the outer face of $G$ is only incident with the vertices of $K$.  
An ordering $\prec$ of $V(G)\setminus C$ is \emph{valid} if $\prec$ is $7$-two-degenerate relative to $C$ and $u\prec v$ for every $u\in K$ and $v\in V(G)\setminus (C\cup K)$.
We say that a target $(G,K,C)$ is a \emph{counterexample} if there exists no valid ordering $\prec$ of $V(G)\setminus C$.
Let $s(G,K,C)=(n,-c,e_C,q,-t,e)$, where $n=|V(G)|$,  $c=|C|$, $e_C$ is the number of edges of $G$ with at least one end in $C$, 
$q$ is the number of components of $G$, $t$ is the number of triangular faces of $G$, and $e=|E(G)|$.
A target $(G',K',C')$ is \emph{smaller} than $(G,K,C)$ if $s(G',K',C')$ is lexicographically smaller than $s(G,K,C)$ (observe that this establishes a well-quasiordering on targets).
We say that a counterexample is \emph{minimal} if there exists no smaller counterexample.

In a series of lemmas, we now establish the basic properties of
minimal counterexamples.

\begin{lemma}\label{lemma-conn}
If $(G,K,C)$ is a minimal counterexample, then the following hold:
\begin{enumerate}[label=(\roman*)]
\item $C$ is an independent set,
\item all vertices of $C$ have degree $4$,
\item $G$ is connected, and
\item all faces of $G$ except possibly for the outer face have length $3$.
\end{enumerate}
\end{lemma}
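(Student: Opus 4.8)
The plan is to prove all four assertions by the same minimality device. For each one I assume the property fails and build from $(G,K,C)$ a modified target $(G',K',C')$ whose measure $s$ is strictly lexicographically smaller; being smaller, $(G',K',C')$ is not a counterexample and so admits a valid ordering $\prec$, and I then argue that $\prec$ (restricted or reinterpreted on $V(G)\setminus C$) is also valid for $(G,K,C)$, contradicting that $(G,K,C)$ is a counterexample. The transfer of validity always reduces to comparing the sets $L_{G,C,\prec}$ and $L_{G',C',\prec}$; the easiest instance is when $G$ is a spanning subgraph of $G'$ with the same $K$ and $C$, since then $L_{G,C,\prec}(v)\subseteq L_{G',C',\prec}(v)$ for every $v$ and $7$-two-degeneracy relative to $C$ is inherited automatically. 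In each case I must separately verify that $(G',K',C')$ is a legitimate target: that the outer face is still bounded by exactly $K'$ with $2\le|K'|\le 3$, that $C'$ avoids $K'$, and that every vertex of $C'$ still has at most four neighbors outside $C'$.

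For independence, suppose some edge $uv$ has both ends in $C$ and delete it. Since $C\cap K=\emptyset$, both ends are interior, so the edge is not incident with the outer face and $K$ is unchanged, and the neighborhoods of $C$-vertices in $V(G)\setminus C$ are untouched; thus $(G-uv,K,C)$ is a target, and it is smaller because $n$ and $c$ are preserved while $e_C$ drops. The key point is that deleting an edge with both ends in $C$ alters none of the three defining conditions of $L_{G,C,\prec}(x)$ for $x\notin C$, because each condition involves an edge with at least one end outside $C$; hence $L$ is literally unchanged for every $x\in V(G)\setminus C$, and a valid ordering of the smaller target is valid for $(G,K,C)$. For the degree condition, independence already gives that each $C$-vertex has all of its at most four neighbors outside $C$. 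If some $w\in C$ had degree at most $3$, I would delete $w$ and add edges making $N(w)$ into a clique, drawn inside the at most three faces formerly incident with $w$ (legitimate since $|N(w)|\le 3$ and parallel edges are allowed). This strictly decreases $n$. The role of the clique is that every common-neighbor-in-$C$ relation mediated by $w$ turns into an adjacency, so one checks $L_{G,C,\prec}(v)\subseteq L_{G',C\setminus\{w\},\prec}(v)$ for each $v$ and transfers validity; together with the at-most-four bound this forces degree exactly $4$.

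For connectivity, if $G$ were disconnected then some face $f$ is incident with two distinct components, and I would add an edge inside $f$ joining them, chosen with both endpoints outside $C$. Such endpoints exist because a $C$-vertex incident with $f$ has degree $4$ and its two $f$-flanking neighbors lie in the same component, are incident with $f$, and are outside $C$ by independence; moreover no component lies entirely in $C$, since a $C$-vertex cannot be isolated. This lowers $q$ while fixing $n,c,e_C$ and $K$, and as $G$ is a spanning subgraph of $G'$ validity transfers. Finally, for triangulation, if a bounded face $f$ had length at least $4$ I would choose a vertex $v$ on $\partial f$ whose two flanking neighbors $u,w$ are distinct and outside $C$ (any $C$-vertex of $\partial f$ serves as such a $v$ by independence, and if $\partial f$ meets no $C$-vertex then any vertex does) and add the chord $uw$ inside $f$; this creates a triangle, raising $t$ while fixing $n,c,e_C,q$ and $K$, and again $G\subseteq G'$ transfers validity.

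The routine parts are the measure computations and the subgraph-monotonicity of $L$. The hard part will be the planar bookkeeping in the last two reductions: ensuring in every configuration that the joining edge or chord can be drawn with both endpoints outside $C$ and without disturbing the outer face (so that $K$, and the bound $|K|\le 3$, are genuinely preserved), and handling degenerate cases — parallel edges around low-degree or $C$-vertices, or a face where the two flanks $u,w$ of a candidate $v$ coincide — where a different choice of $v$ or of the endpoints is required.
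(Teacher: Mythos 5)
Your overall strategy is exactly the paper's: modify a putative minimal counterexample into a lexicographically smaller target, pull back its valid ordering, and contradict minimality. Your independence and connectivity steps match the paper essentially verbatim, and your reordering of the claims --- establishing the degree bound for $C$-vertices \emph{before} triangulation, by deleting a low-degree $w\in C$ and completing $N(w)$ to a clique inside the face left by $w$ --- is a legitimate variant: the paper instead proves triangulation first, so that $N(w)$ is automatically a clique and plain deletion suffices, whereas your version needs the (correct) planarity check that all of $N(w)$ lies on one face after deleting $w$, so the at most three clique edges can be drawn there. The transfer $L_{G,C,\prec}(y)\subseteq L_{G',C\setminus\{w\},\prec}(y)$ indeed works because the only affected case, a common neighbor $w\in C$ of $x$ and $y$, is converted into the adjacency $xy$.

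However, two of the four assertions are left with genuine holes. First, ``all vertices of $C$ have degree $4$'': you only exclude degree at most $3$, and your closing inference ``together with the at-most-four bound this forces degree exactly 4'' is invalid, because the target condition bounds the number of \emph{neighbors} of a $C$-vertex, not its degree --- the paper allows parallel edges, so a $C$-vertex with $4$ neighbors could have degree $5$ or more. Closing this needs a separate reduction (the paper's: delete one of two parallel edges at a $C$-vertex; this drops $e_C$ and leaves every $L_{G,C,\prec}(x)$ unchanged, since $L$ depends only on adjacency), which you flag under ``degenerate cases'' but never supply. Second, triangulation: your claim that when $\partial f$ meets no $C$-vertex ``any vertex does'' serve as $v$ is false, since a facial walk of an inner face can repeat a vertex, so both flanks of every natural candidate may coincide. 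This is precisely where the paper does real work: writing $f=v_1v_2v_3v_4\ldots$ with $v_1=v_3$, it disposes of $v_2\in C$ by parallel-edge/degree-one reductions, and otherwise observes that $v_1=v_3$ is a cut vertex of $G$, hence $v_2\neq v_4$, so the chord $v_2v_4$ can be added. You defer this to ``a different choice of $v$ or of the endpoints,'' but without the cut-vertex observation you cannot even guarantee your proposed chord is not a loop. Both missing arguments are short, but they are the actual content of the degenerate cases, and the proof is incomplete without them.
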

\begin{proof}
  We prove claim (i). If an edge $e\in E(G)$ has both ends in $C$,
  then $(G-e,K,C)$ is a target smaller than $(G,K,C)$, and by the
  minimality of $(G,K,C)$, there exists a valid ordering $\prec$ for
  the target $(G-e,K,C)$.  Note that
  $L_{G,C,\prec}(v)=L_{G-e,C,\prec}(v)$ for every
  $v\in V(G)\setminus C$, and thus $\prec$ is also valid for the
  target $(G,K,C)$, which is a contradiction.  Hence, $C$ is an
  independent set in $G$.

  We continue with claim (iii). Suppose that $G$ is not connected.
  Hence, $G$ contains a face $f$ incident with at least two distinct
  components $G_1$ and $G_2$ of $G$.  If $G_1$ or $G_2$ consists of
  only one vertex $v\in C$, then $(G-v,K,C\setminus \{v\})$ is a
  target smaller than $(G,K,C)$ and its valid ordering is also valid
  for $(G,K,C)$, which is a contradiction.  Otherwise, since $C$ is an
  independent set, there exist vertices $v_1\in V(G_1)\setminus C$ and
  $v_2\in V(G_2)\setminus C$ incident with $f$.  Then,
  $(G+v_1v_2,K,C)$ is a target smaller than $(G,K,C)$ (with fewer
  components) and its valid ordering is also valid for $(G,K,C)$,
  which is a contradiction.  Hence, $G$ is connected.

  We next prove claim (iv). Suppose that $G$ has a non-outer face $f$
  of length other than three.  If $f$ has length $2$ and not all its
  edges belong to the boundary of the outer face, then removing one of
  its edges results in a target smaller than $(G,K,C)$ whose valid
  ordering is also valid for $(G,K,C)$, which is a contradiction.  If
  $f$ has length two and all its edges belong to the boundary of the
  outer face, then $V(G)=K$ and $(G,K,C)$ has a valid ordering, which
  is a contradiction.  Hence, $f$ has length at least $4$.  Let
  $f=v_1v_2v_3v_4\ldots$, with the labels chosen so that $v_2\in C$ if
  any vertex of $C$ is incident with $f$.  Since $C$ is an independent
  set, it follows that $v_1,v_3\not\in C$.  If $v_1\neq v_3$, then
  $(G+v_1v_3,K,C)$ is a target smaller than $(G,K,C)$ (with more
  triangular faces) and its valid ordering is also valid for
  $(G,K,C)$, which is a contradiction.  Hence, $v_1=v_3$.

  If $v_2\in C$ and $v_2$ has degree at least two, then removing one
  of at least two edges between $v_2$ and $v_1=v_3$ results in a
  target smaller than $(G,K,C)$ whose valid ordering is also valid for
  $(G,K,C)$.  If $v_2\in C$ and $v_2$ has degree exactly one, then
  $(G-v_2,K,C\setminus \{v_2\})$ is a target smaller than $(G,K,C)$
  whose valid ordering is also valid for $(G,K,C)$.  In both cases, we
  obtain a contradiction, and thus $v_2\not\in C$.

  By the choice of the labels of $f$, it follows that no vertex of $C$
  is incident with $f$.  Furthermore, note that $v_1=v_3$ is a cut in
  $G$, and thus $v_2\neq v_4$.  Consequently, $(G+v_2v_4,K,C)$ is a
  target smaller than $(G,K,C)$ whose valid ordering is also valid for
  $(G,K,C)$.  This contradiction shows that every non-outer face of
  $G$ has length three.

  It remains to prove claim (ii). Suppose that a vertex $v\in C$ has
  degree at most three.  Since $v\not\in K$, the faces incident with
  $v$ have length three, and thus the neighborhood of $v$ forms a
  clique in $G$.  The target $(G-v,K,C\setminus \{v\})$ is smaller
  than $(G,K,C)$, and thus it has a valid ordering $\prec$.  Suppose
  that for some vertices $x,y\in V(G)\setminus C$, we have
  $x\in L_{G,C,\prec}(y)$.  If $v$ is not a common neighbor of $x$ and
  $y$, then clearly $x\in L_{G-v,C\setminus \{v\},\prec}(y)$.  If $v$
  is a common neighbor of $x$ and $y$, then $x$ and $y$ are adjacent,
  and thus $x\in L_{G-v,C\setminus \{v\},\prec}(y)$.  It follows that
  $L_{G,C,\prec}(y)=L_{G-v,C\setminus \{v\},\prec}(y)$ for every
  $y\in V(G)\setminus C$, and thus $\prec$ is a valid ordering for
  $(G,K,C)$.  This is a contradiction, and thus all vertices of $C$
  have degree at least $4$.

  Note that a vertex of $C$ is not incident with parallel edges, as
  suppressing them would result in a target smaller than $(G,K,C)$
  whose valid ordering is also valid for $(G,K,C)$.  Since $C$ is an
  independent set and every vertex of $C$ has at most $4$ neighbors
  not in $C$, it follows that all vertices in $C$ have degree exactly
  $4$.
\end{proof}

Consider a target $(G,K,C)$. For a vertex $v\in V(G)\setminus C$,
let $a(v)$ be the number of neighbors of $v$ in $V(G)\setminus C$, and
let $b(v)$ be the number of neighbors of $v$ in $C$ (counted with
multiplicity when $v$ is incident with parallel edges). For
non-negative integers $a,b$, we say that $v$ is an
\emph{$(a,b)$-vertex} if $a(v)=a$ and $b(v)=b$. Similarly, we say that
$v$ is an \emph{$(a,\le\!b')$-vertex} if $a(v) = a$ and
$b(v) \leq b'$.

\begin{corollary}\label{cor-non44}
If $(G,K,C)$ is a minimal counterexample and $v\in V(G)\setminus C$,
then $a(v) \geq b(v)$. Furthermore, if $a(v)=b(v)$ and $u\in
V(G)\setminus C$ is a neighbor of $v$, then $b(u)\geq 2$.
\end{corollary}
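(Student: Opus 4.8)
The plan is to derive both statements directly from the structural conclusions of Lemma~\ref{lemma-conn}, by examining the cyclic rotation of the neighbours of $v$ in the plane embedding. Write $d$ for the degree of $v$ and list its neighbours as $n_1,\dots,n_d$ in rotation order around $v$ (a neighbour joined to $v$ by parallel edges occurs with the corresponding multiplicity, so that $d=a+b$). Two facts will drive everything. First, two neighbours that are consecutive in the rotation and bound a common non-outer face are adjacent in $G$, since by Lemma~\ref{lemma-conn} every non-outer face is a triangle. Second, by Lemma~\ref{lemma-conn} the set $C$ is independent, so no two consecutive neighbours of $v$ can both lie in $C$; that is, the $C$-neighbours occupy pairwise non-consecutive positions in the rotation.

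For the bound $b\le a$ I would split according to whether $v\in K$. If $v\notin K$, then $v$ is not incident with the outer face, every face at $v$ is a triangle, and the rotation closes up into a cycle $n_1n_2\cdots n_dn_1$. A set of pairwise non-consecutive positions on a cycle of length $d=a+b$ has size at most $\lfloor d/2\rfloor$, so $b\le a$. If $v\in K$, then the two neighbours of $v$ met along the outer face are the endpoints of the rotation, which now forms a path $n_1n_2\cdots n_d$; these endpoints lie on the outer face and hence in $K$, and since $C$ is disjoint from $K$ we get $n_1,n_d\notin C$. Mapping each $C$-neighbour $n_i$ to its successor $n_{i+1}$ (which exists because $n_d\notin C$, and is itself not in $C$ because the $C$-positions are non-consecutive) gives an injection from the $C$-neighbours into the non-$C$-neighbours that misses $n_1$; thus $b\le a-1<a$.

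For the second statement, assume $b=a$. The path case above yields $b\le a-1$, a contradiction, so $v\notin K$ and the rotation is a cycle of length $2a$ carrying exactly $a$ pairwise non-consecutive $C$-positions. On a cycle this forces perfect alternation between $C$- and non-$C$-neighbours, so each non-$C$ neighbour of $v$ is flanked in the rotation by two $C$-neighbours of $v$. Applying this to $u=n_i$, its rotation-neighbours $n_{i-1},n_{i+1}$ both lie in $C$ and are both adjacent to $u$ (by the triangle fact above). They are distinct, since $n_{i-1}=n_{i+1}$ would require parallel edges at a vertex of $C$, which is excluded by Lemma~\ref{lemma-conn}. Hence $u$ has at least two neighbours in $C$, i.e.\ $b'\ge 2$.

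I expect the main obstacle to be bookkeeping at the boundary and in degenerate cases, rather than any deep difficulty: I must use $C\cap K=\emptyset$ to keep the path endpoints out of $C$ when $v\in K$, and the absence of parallel edges at vertices of $C$ to guarantee both that each $C$-neighbour occupies a single rotation slot and that the two flanking $C$-neighbours in the alternating case are genuinely distinct. Once these points are pinned down, both parts reduce to the elementary fact about the maximum number of pairwise non-consecutive positions on a cycle or a path.
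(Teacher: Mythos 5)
Your proof is correct and takes essentially the same route as the paper's: triangular faces plus the independence of $C$ force the $C$-neighbours of $v$ to occupy pairwise non-consecutive slots in the rotation (hence $b\le a$), and equality forces perfect alternation, so each non-$C$ neighbour is flanked by two distinct $C$-vertices adjacent to it. The paper's version is just a terser rendering of the same argument (handling $v\in K$ by noting that outer-face vertices avoid $C$ rather than by your injection, and leaving the parallel-edge degeneracies implicit), so there is no substantive difference.
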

\begin{proof}
  If $v\not\in K$, then all faces incident with $v$ are triangles.  If
  $v\in K$, then all faces except possibly for the outer one are
  triangles, and no vertex of the outer face belongs to $C$.  Since
  $C$ is an independent set, at most half of the neighbors of $v$
  belong to $C$, and thus $b(v)\le a(v)$.  Furthermore, if
  $b(v)=a(v)$, then every second neighbor of $v$ belongs to $C$, and
  thus $u$ and $v$ have two common neighbors belonging to $C$.
\end{proof}

\begin{lemma}\label{lemma-deg4}
  If $(G,K,C)$ is a minimal counterexample and
  $v\in V(G)\setminus (K\cup C)$, then $a(v) \geq 4$, and if $a(v)=4$,
  then $b(v)=4$.
\end{lemma}
\begin{proof}
  Suppose for a contradiction that $v\in V(G)\setminus (K\cup C)$
  satisfies either $a(v)\le 3$, or $a(v)=4$ and $b(v)\le 3$. By
  Corollary~\ref{cor-non44}, in the former case we have
  $b(v)\le a(v)$.

  Since $v$ has at most $4$ neighbors in $V(G)\setminus C$, it follows
  that $(G,K,C\cup \{v\})$ is a target. Note that $(G,K,C\cup \{v\})$
  is smaller than $(G,K,C)$, and let $\prec$ be its valid ordering.
  Extend $\prec$ to $V(G)\setminus C$ by letting $u\prec v$ for every
  $u\in V(G)\setminus (C\cup \{v\})$.  Note that
  $L_{G,C\cup \{v\},\prec}(w)=L_{G,C,\prec}(w)$ for every
  $w\in V(G)\setminus (C\cup \{v\})$.  Furthermore, $L_{G,C,\prec}(v)$
  contains only the neighbors of $v$ that do not belong to $C$, and
  the vertices $z$ such that $z$ and $v$ have a common neighbor
  $w\in C$.  However, since all faces of $G$ incident with $v$ are
  triangles and all vertices in $C$ have degree $4$, each neighbor
  $w\in C$ of $v$ has at most one neighbor $z$ not adjacent to $v$.
  Therefore, $|L_{G,C,\prec}(v)|\le\deg(v)=a(v)+b(v)\le 7$, and thus
  $\prec$ is a valid ordering for $(G,K,C)$.  This is a contradiction.
\end{proof}

\begin{lemma}\label{lemma-nosep}
Suppose that $(G,K,C)$ is a minimal counterexample.  If $|K|=3$, then $G$ contains no parallel edges and all triangles in $G$ bound
a face.  If $|K|=2$, then the edges bounding the outer face of $G$ are the only parallel edges in $G$, and
every non-facial triangle in $G$ contains a vertex of $C$ and both vertices of $K$.
\end{lemma}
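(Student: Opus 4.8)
The plan is to treat parallel edges and non-facial triangles uniformly as \emph{separators} and to show that, in a minimal counterexample, a separator disjoint from $C$ can be ``cut along'' to produce two strictly smaller targets whose valid orderings glue into a valid ordering of $(G,K,C)$, contradicting minimality. Concretely, a pair of parallel edges between $u$ and $v$ bounds a bigon and yields the $2$-vertex separator $\{u,v\}$, while a non-facial triangle $T=xyz$ bounds a triangular region and yields the $3$-vertex separator $\{x,y,z\}$. In both cases the separator is a clique (the two endpoints are joined by the parallel edge; the three vertices of $T$ are pairwise adjacent), and this is exactly what makes the gluing work.

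First I would set up the decomposition. Writing $S$ for the separator and $H$ for the closed region it bounds, let $G_{\mathrm{in}}$ be the part of $G$ drawn in $H$ (including $S$) and $G_{\mathrm{out}}$ the part drawn outside (including $S$); since the separator is non-facial, each region contains an interior vertex, so both pieces are strictly smaller than $G$ in the first coordinate $n$ of $s$. If $S\cap C=\emptyset$, then $(G_{\mathrm{in}},S,C\cap V(G_{\mathrm{in}}))$ is a legitimate target (here $2\le|S|\le 3$ and $S$ bounds the outer face of $G_{\mathrm{in}}$), as is $(G_{\mathrm{out}},K,C\cap V(G_{\mathrm{out}}))$; both are smaller, so minimality supplies valid orderings $\prec_{\mathrm{out}}$ and $\prec_{\mathrm{in}}$. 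I would then order $V(G)\setminus C$ by placing all of $V(G_{\mathrm{out}})\setminus C$ first, according to $\prec_{\mathrm{out}}$, and then the interior vertices $V(G_{\mathrm{in}})\setminus(S\cup C)$ according to $\prec_{\mathrm{in}}$. The key verification is that this combined ordering is $7$-two-degenerate relatively to $C$: an interior vertex of $H$ sees only $G_{\mathrm{in}}$, so its $L$-set is unchanged; an outer or separator vertex $v$ can acquire a new entry only through a common neighbor $w$ lying strictly inside $H$, but then the partner $u\prec v$ is a neighbor of $w$, hence $u\in V(G_{\mathrm{in}})$, and since every interior vertex succeeds $v$ we must have $u\in S$, so $uv\in E$ and $u$ already lies in $L_{G_{\mathrm{out}},\ldots}(v)$ because $S$ is a clique. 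Thus no $L$-set grows, and we obtain a valid ordering of $(G,K,C)$, a contradiction. Applying this with the two separator types immediately yields that every non-facial triangle meets $C$, and that the only surviving parallel edges are those bounding a $2$-face; by Lemma~\ref{lemma-conn} such a $2$-face must be the outer one, which occurs precisely when $|K|=2$ and is impossible when $|K|=3$ (the outer face is then a triangle). This settles the parallel-edge assertions for both values of $|K|$ and reduces the triangle assertions to analysing non-facial triangles $T=xyz$ with, say, $z\in C$ (exactly one such vertex, since $C$ is independent by Lemma~\ref{lemma-conn} and the vertices of $T$ are pairwise adjacent).

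The delicate residual case, which I expect to be the main obstacle, is precisely a non-facial triangle through a $C$-vertex: here $\{x,y,z\}$ cannot serve as the inner $K$, and worse, $z$ has degree exactly $4$ (Lemma~\ref{lemma-conn}) with its two remaining edges leaking to \emph{both} sides of $T$, so the clean cut above breaks down. To handle it I would exploit the local structure forced by Lemma~\ref{lemma-conn}: the four neighbors of $z$ form a $4$-cycle $xpyq$ surrounding $z$, and the diagonal edge $xy$ (needed for $T$ to be a triangle) is drawn outside this $4$-cycle, cutting off one side neighbor, say $p$, into a triangular face $xpy$. This forces $p$ to be adjacent only to $x,y,z$, a low-degree vertex; by Lemma~\ref{lemma-deg4}, which forbids such vertices outside $K\cup C$, together with the independence of $C$, we get $p\in K$. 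Tracing which of $p$'s incident triangular faces can be the outer face — it must contain all of $K$ and avoid $C\ni z$ — then pins down the global picture: when $|K|=3$ the resulting configuration, combined with the fourth neighbor $q$, produces a second forbidden low-degree vertex or a disallowed parallel edge and is excluded outright, giving that all triangles bound a face; when $|K|=2$ the outer bigon on the two $K$-vertices is the only place the diagonal can live, forcing $\{x,y\}=K$ and hence $T=\{k_1,k_2,z\}$ with $z\in C$ a common neighbor, exactly as claimed. The crux, requiring the most care, is controlling the leaking edges at $z$ and showing that each surviving configuration either reduces via a suitably modified $2$-vertex cut at $\{x,y\}$ or collides with Lemma~\ref{lemma-deg4}.
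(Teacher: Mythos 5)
Your overall strategy is the paper's strategy: cut along a non-facial cycle of length $2$ or $3$ that is disjoint from $C$, glue a valid ordering of the outer piece (keeping $K$) with one of the inner piece (whose boundary serves as the new $K$, placed after all outer vertices), and then dispose of a non-facial triangle through a vertex $z\in C$ by local analysis of the four triangular faces at $z$, using the forced facial triangles on $z$'s other neighbors and Lemma~\ref{lemma-deg4}. Your gluing verification is the correct one, including the key point that a new $L$-entry of an outer vertex would have to come from a partner in the separator, which is a clique. Your handling of the residual triangle case is more roundabout than the paper's but sound: the paper simply takes the neighbor of $z$ lying in the \emph{open disk} bounded by the triangle, which can never belong to $K$ (vertices of $K$ are incident with the outer face), so it gets an immediate contradiction with Lemma~\ref{lemma-deg4} and never needs your detour ``$p\in K$, locate the outer face, find a second bad vertex.'' Note that your detour could \emph{not} have been shortened by citing Corollary~\ref{cor-outdeg4} (degree of $K$-vertices), since that corollary is proved after, and from, this lemma; your route via the opposite neighbor $q$ correctly avoids that circularity.

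The genuine gap is the case of parallel edges incident with a vertex of $C$. Your cutting argument applies only to separators disjoint from $C$, and a pair of parallel edges between $u\in C$ and $v\notin C$ forms a non-facial $2$-cycle: it cannot bound a face, since that face would be a bigon not incident with the outer face (as $u\notin K$), contradicting Lemma~\ref{lemma-conn}. Moreover, nothing you have proved excludes such a pair: the statement of Lemma~\ref{lemma-conn} is perfectly consistent with $u\in C$ of degree $4$ having two parallel edges to $v$, two further neighbors, and all four faces at $u$ triangular. Hence your assertion that ``the only surviving parallel edges are those bounding a $2$-face'' does not follow, and with it the simplicity claims of the lemma ($|K|=3$: no parallel edges at all; $|K|=2$: only the outer pair) remain unproved. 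The paper closes exactly this case by a minimality argument of a different kind, which your uniform separator viewpoint never supplies: delete one of the two parallel edges. The resulting target is smaller because the coordinate $e_C$ of the measure $s(G,K,C)$ (the number of edges with an end in $C$) strictly drops---this is precisely what that coordinate is for---and a valid ordering of the smaller target is valid for $(G,K,C)$ because the sets $L_{G,C,\prec}$ depend only on adjacency and common neighbors, never on edge multiplicities. The damage is confined to the parallel-edge assertions: your triangle analysis survives, because the auxiliary parallel edges appearing there (the two $xy$-edges in the ``consecutive neighbors'' configuration) join vertices outside $C$ and are covered by your cut-or-facial dichotomy.

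One further small inaccuracy: your claim that both pieces of a cut are smaller ``in the first coordinate $n$'' can fail when $V(S)\supseteq K$ and the side containing the outer face has no vertex outside $S$ (a situation possible only with parallel edges); the pieces are still smaller, but only by the later coordinates of $s$, so the justification, not the conclusion, is what needs repair there.
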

\begin{proof}
Consider either a pair of parallel edges that do not bound the outer face of $G$, or a non-facial triangle in $G$.
Since all faces of $G$ except for the outer one have length three,
in the former case $G$ contains a non-facial cycle of length two.  Hence, let $Q$ be a non-facial cycle of length $2$ or $3$ in $G$.  

Suppose first that $V(Q)\cap C=\emptyset$.  Let $G_1$ be the subgraph of $G$ drawn in the closure of the outer face of $Q$, and let $G_2$ be the subgraph of $G$ drawn in the
closure of the inner face of $Q$.  Let $C_1=C\cap V(G_1)$ and $C_2=C\cap V(G_2)$.  Note that $(G_1, K,C_1)$ and $(G_2,V(Q),C_2)$ are targets, and since $Q$ is a non-facial cycle,
they are both smaller than $(G,K,C)$ and they have valid orderings $\prec_1$ and $\prec_2$, respectively.  Let $\prec$ be the ordering of $V(G)\setminus C$ such that
$u\prec v$ if $u,v\in V(G_1)$ and $u\prec_1 v$, or if $u,v\in V(G_2)\setminus V(Q)$ and $u\prec_2 v$, or if $u\in V(G_1)$ and $v\in V(G_2)\setminus V(Q)$.

Observe that for any $v\in V(G_1)\setminus (V(Q)\cup C_1)$, we have $L_{G,C,\prec}(v)=L_{G_1,C_1,\prec_1}(v)$, since $v$ has no neighbors in $V(G_2)$ other than those belonging to $Q$
(which are also contained in $G_1$), and since $v\prec w$ for every $w\in V(G_2)\setminus V(Q)$.  Similarly, for any $v\in V(G_2)\setminus (V(Q)\cup C_2)$, we have
$L_{G,C,\prec}(v)=L_{G_2,C_2,\prec_2}(v)$, since $v$ has no neighbors in $V(G_1)$ other than those belonging to $Q$, and all the vertices of $Q$ are contained in $G_2$
and are smaller than $v$ in both orderings $\prec$ and $\prec_2$.  Finally, for $v\in V(Q)$ we have $L_{G,C,\prec}(v)=L_{G_1,C_1,\prec_1}(v)$, since all vertices of
$V(G_2)\setminus (V(Q)\cup C_2)$ are greater than $v$ in $\prec$ and $Q$ is a clique, so all vertices of $Q$ smaller than $v$ in $\prec$ or $\prec_1$ belong to both
$L_{G,C,\prec}(v)$ and $L_{G_1,C_1,\prec_1}(v)$.  Furthermore, since $K\subseteq V(G_1)$, the choice of $\prec$ ensures that $u\prec v$ for every $u\in K$ and $v\in V(G)\setminus(C\cup K)$.
Hence, $\prec$ is a valid ordering of $(G,K,C)$, which is a contradiction.

Therefore, every non-facial $(\le\!3)$-cycle in $G$ intersects $C$.  Since $C$ is an independent set, $Q$ contains exactly one vertex of $C$.  If $Q$ has
length two, then removing one of the parallel edges of $Q$ results in a target smaller than $(G,K,C)$ whose valid ordering is also valid for $(G,K,C)$.
It follows that $G$ contains no parallel edges except possibly for those bounding its outer face, and in particular $Q$ is a triangle.

Let $Q=v_1v_2v_3$, where $v_1\in C$.  Let $e$ and $e'$ be the edges of $G$ incident with $v_1$ distinct from $v_1v_2$ and $v_1v_3$.
If exactly one of the edges $e$ and $e'$ is contained in the open disk bounded by $Q$, then
consider the neighbor $v_4$ of $v_1$ in the open disk bounded by $Q$. Since all faces incident with $v_1$ have length three, $v_4$ is adjacent to $v_2$ and $v_3$. Since the triangle $v_2v_3v_4$ does not intersect $C$, it bounds a face.  However, this implies that $v_4$
is a $(2,1)$-vertex, which contradicts Lemma~\ref{lemma-deg4}.

If neither $e$ nor $e'$ is contained in the open disk bounded by $Q$, then since $Q$ is not a facial triangle and all faces incident with $v_1$ have length three, it follows that $v_2$ and $v_3$ would be joined
by a parallel edge drawn inside the open disk bounded by $Q$; however, this is impossible, since such a parallel edge
is not incident with the outer face of $G$.
Finally, consider the case that both $e$ and $e'$ are contained in the open disk bounded by $Q$.
Similarly, $v_2$ and $v_3$ are joined by a parallel edge, and thus $K=\{v_2,v_3\}$.
We conclude that every non-facial triangle in $G$ contains a vertex of $C$ and two vertices of $K$.
\end{proof}

\begin{corollary}\label{cor-outdeg4}
If $(G,K,C)$ is a minimal counterexample, then every vertex of $K$ has degree at least $4$.
\end{corollary}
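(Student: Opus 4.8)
The plan is to suppose, for a contradiction, that some $v\in K$ has $\deg(v)\le 3$, and to derive a contradiction \emph{purely} from the structural lemmas already established, without constructing any ordering. The key observation is that the triangulated structure around $v$ (Lemma~\ref{lemma-conn}) together with the restrictions on parallel edges and triangles (Lemma~\ref{lemma-nosep}) leaves very few local configurations: each one either forces $G$ to be so small that $V(G)\setminus(C\cup K)=\emptyset$, so that the trivial ordering of $K$ is valid and $(G,K,C)$ is not a counterexample, or it produces a low-degree interior vertex forbidden by Lemma~\ref{lemma-deg4} (or a degree violation of Lemma~\ref{lemma-conn}). I would first record that $\deg(v)\ge 2$ in both cases, since the outer face is incident only with the vertices of $K$: for $|K|=3$ the outer boundary is the triangle on $K=\{v,a,b\}$, giving edges $va,vb$, while for $|K|=2$, say $K=\{v,w\}$, Lemma~\ref{lemma-nosep} forces the outer face to be the bigon formed by exactly two parallel edges $vw$ (the only parallel edges of $G$). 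It then remains to exclude $\deg(v)\in\{2,3\}$.

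For $|K|=3$: if $\deg(v)=2$, the unique non-outer face at $v$ is the triangle $vab$, and since $ab$ already bounds the outer face on its other side, $G$ is exactly this triangle, so the trivial ordering is valid --- a contradiction. If $\deg(v)=3$, the interior neighbour $u$ of $v$ yields triangular faces $vau$ and $vub$, hence edges $au,bu$, and therefore a triangle $aub$; by Lemma~\ref{lemma-nosep} every triangle bounds a face when $|K|=3$, so $aub$ is facial and $G=K_4$ on $\{v,a,b,u\}$. Then $u$ is an interior vertex of degree $3$, that is, a $(3,0)$-vertex (or a degree-$3$ vertex of $C$), contradicting Lemma~\ref{lemma-deg4} (respectively Lemma~\ref{lemma-conn}).

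For $|K|=2$ with $K=\{v,w\}$: if $\deg(v)=2$, then $w$ is the only neighbour of $v$, so no triangle contains $v$; since every non-outer face incident with $v$ must be a triangle (Lemma~\ref{lemma-conn}), $v$ is incident with no non-outer face, forcing $V(G)=\{v,w\}$ and again a valid trivial ordering. If $\deg(v)=3$, let $u$ be the third neighbour; the two faces at the edge $vu$ are triangles, each necessarily equal to $vuw$ (using one of the two parallel edges $vw$ together with the edge $uw$), so they fill the bigon and $V(G)=\{v,w,u\}$ with $\deg(u)=2$. Thus $u$ is an interior $(2,0)$-vertex (or a degree-$2$ vertex of $C$), once more contradicting Lemma~\ref{lemma-deg4} (respectively Lemma~\ref{lemma-conn}). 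In all cases we reach a contradiction, so $\deg(v)\ge 4$.

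The main obstacle is not any single branch but pinning down the local embedding correctly: one must verify that in the degree-$3$, $|K|=2$ case the two inner faces at $vu$ genuinely coincide with the triangle $vuw$ and hence leave no room for further vertices, and that in the degree-$3$, $|K|=3$ case the triangle $aub$ is forced to be facial by Lemma~\ref{lemma-nosep}. Once these planarity details are made precise, each branch collapses to a configuration excluded by the earlier results. I would also flag that the seemingly natural alternative --- deleting $v$ and lifting an ordering of a smaller target --- does not work cleanly here, because the neighbour of $v$ exposed on the new outer boundary would be \emph{promoted} into the new $K$, and bounding its relative neighbourhood list $L_{G,C,\prec}$ after lifting is problematic; the structural argument above avoids this difficulty entirely.
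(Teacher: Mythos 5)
Your proof is correct and takes essentially the same route as the paper's: a case analysis on $|K|\in\{2,3\}$ and $\deg(v)\in\{2,3\}$, using Lemma~\ref{lemma-conn} (triangular inner faces, degree-$4$ vertices of $C$), Lemma~\ref{lemma-nosep} (parallel edges and facial triangles) and Lemma~\ref{lemma-deg4}, where each case either forces $V(G)$ to be so small that a trivial valid ordering exists or produces an internal $(2,0)$- or $(3,0)$-vertex that Lemma~\ref{lemma-deg4} forbids. The only cosmetic difference is in the $|K|=2$, degree-$2$ case, where the paper derives an immediate contradiction (the triangular inner face at $v$ would force a loop) instead of your conclusion that $V(G)=K$.
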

\begin{proof}
Suppose first that a vertex $v\in K$ has degree two.  Since all faces of $G$ except for the outer one are triangles,
if $|K|=2$, this would imply that $G$ contains a loop, which is a contradiction.  If $|K|=3$, then since all faces of $G$
are triangles and $G$ does not contain parallel edges, we have $V(G)=K$, and any ordering of $V(G)$ is valid,
which is a contradiction.

Next, suppose that $v$ has degree three, and let $x$ be the neighbor of $v$ not belonging to $K$.
If $|K|=2$, then since all faces incident with $x$ are triangles and $x$ is not incident with a parallel edge,
it follows that $V(G)=K\cup \{x\}$ and $x$ has degree two.
If $|K|=3$, say $K=\{v,y_1,y_2\}$, then since all faces of $G$ are triangles, it follows that
$vxy_1$ and $vxy_2$ are triangles.  Also, every triangle in $G$ is facial, and thus $x$ has degree three.
In both cases, $x\not\in C$ and $x$ is a $(2,0)$-vertex or a $(3,0)$-vertex, which contradicts Lemma~\ref{lemma-deg4}.
\end{proof}

Let $\prec$ be an ordering of $V(G)\setminus C$ in a target $(G,K,C)$.  For adjacent vertices $u\in V(G)\setminus C$ and $v$, a vertex $w\in V(G)\setminus C$ distinct from $u$ is a
\emph{friend of $u$ via $v$} if $w\prec u$ and
\begin{itemize}
\item $w=v$, or
\item $vw\in E(G)$, $uw\not\in E(G)$, and $v\in C$, or
\item $vw\in E(G)$, $uw\not\in E(G)$, $u$ and $w$ do not have a common neighbor in $C$, and $u\prec v$.
\end{itemize}
Note that $L_{G,C,\prec}(u)$ consists exactly of the friends of $u$ via its neighbors.  We will frequently use the following observations.
\begin{lemma}\label{lemma-omfr}
Let $(G,K,C)$ be a minimal counterexample and let $u\in V(G)\setminus
(C\cup K)$ and $v\in V(G)$ be neighbors.  Let $\prec$ be an ordering
of $V(G)\setminus C$. Then the following hold:
\begin{enumerate}[label=(\roman*)]
\item if $v\prec u$ or $v\in C$, then $u$ has at most one friend via $v$,
\item if $v\not\in C\cup K$ and $u\prec v$, then $u$ has at most $a(v)-3$
  friends via $v$,
\item if $v\not\in C\cup K$, $u\prec v$, and $v$ has a neighbor
  $r\not\in C$ non-adjacent to $u$ such that $u\prec r$ and no vertex
  of $C$ is a common neighbor of $u,v$ and $r$, then $u$ has at most
  $a(v)-4$ friends via $v$.
\end{enumerate}
\end{lemma}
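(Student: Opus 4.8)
The plan is to read each of the three bounds directly off the definition of a \emph{friend of $u$ via $v$}, using two facts about a minimal counterexample supplied by Lemma~\ref{lemma-conn}: every vertex outside $K$ has only triangular incident faces, so its neighbours occur in a cyclic order in which consecutive neighbours are adjacent; and $C$ is an independent set all of whose vertices have degree exactly $4$. Throughout, a friend must lie in $V(G)\setminus C$ and satisfy $w\prec u$, and $L_{G,C,\prec}(u)$ is exactly the set of friends of $u$ via its neighbours.

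For the first bullet I would split on the hypotheses. If $v\prec u$ then $v\notin C$, and of the three clauses defining a friend the second (needing $v\in C$) and the third (needing $u\prec v$) both fail, so only $w=v$ qualifies: at most one friend. If instead $v\in C$, then only the second clause can apply, so every friend is a neighbour $w$ of $v$ with $uw\notin E(G)$; since $v$ has degree $4$ with triangular incident faces, its four (distinct) neighbours form a $4$-cycle through $u$, on which the two cyclic neighbours of $u$ are adjacent to $u$, leaving only the vertex opposite $u$ as a possible friend.

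For the second bullet, $u\prec v$ and $v\notin C\cup K$, so only the third clause survives: a friend is a neighbour of $v$ in $V(G)\setminus C$ that is non-adjacent to $u$ and shares no neighbour in $C$ with $u$. I would exhibit three distinct neighbours of $v$ in $V(G)\setminus C$ that are \emph{not} friends. One is $u$ itself. For the others, walk along the cyclic order of the neighbours of $v$ away from $u$ on each side: the first neighbour on that side is adjacent to $u$ through a common triangle, so if it lies outside $C$ it is already blocked; and if it lies in $C$ then, since $C$ is independent, the next neighbour lies outside $C$ and shares that $C$-vertex with $u$, so it is blocked by the common-neighbour-in-$C$ clause. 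This gives one blocked vertex on each side. These three vertices are distinct because $\deg(v)=a+b\ge 5$: Lemma~\ref{lemma-deg4} forces $a\ge 4$, and if $a=4$ then Corollary~\ref{cor-non44} forces $b=4$, so the (at most five) cyclic positions involved cannot collide. Hence at most $a-3$ friends.

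For the third bullet I would add $r$ as a fourth non-friend: $r$ is a neighbour of $v$ in $V(G)\setminus C$ with $u\prec r$, so it cannot satisfy $w\prec u$. It remains to check that $r$ differs from the two side-blocks above. Since $r$ is non-adjacent to $u$, it cannot equal a block that was an immediate cyclic neighbour of $u$; the only remaining danger is that $r$ equals a block of the second type, but in that case $v$, $u$, and $r$ all have the \emph{same} neighbour in $C$, which each of the three stated hypotheses excludes. Thus $r$ is a fourth distinct non-friend and the count drops to $a-4$. I expect the only genuine work to be this distinctness bookkeeping — confirming that the cyclic walk never revisits a vertex (where $\deg(v)\ge 5$ is used) and that $r$ avoids the two constructed blocks (where the three hypotheses enter); the friend counting itself is a routine unwinding of the definition.
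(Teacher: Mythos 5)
Your proof is correct and follows essentially the same route as the paper's: unwind the definition of friend, block $u$ together with one vertex on each side of $u$ in the triangulated rotation around $v$ (either the immediate neighbour, or --- when that neighbour lies in $C$ --- the next one, which then shares a $C$-neighbour with $u$), and in the third bullet use the stated hypotheses to show $r$ avoids these blocks. Your treatment is in fact slightly more explicit than the paper's on the distinctness of the two side-blocks (the paper asserts it from $a\ge 4$ without spelling out the case analysis), and the only blemish is attributional: when $a=4$, it is Lemma~\ref{lemma-deg4} (no $(4,\le\!3)$-vertices) combined with Corollary~\ref{cor-non44} ($b\le a$) that forces $b=4$, not Corollary~\ref{cor-non44} alone.
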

\begin{proof}
  (i) If $v\prec u$, then $v$ is the only friend of $u$ via $v$.  If
  $v\in C$, then since all faces incident with $v$ are triangles and
  $v$ has degree $4$, the vertex $v$ has at most one neighbor not
  adjacent to $u$, and thus $u$ has at most one friend via $v$.

  (ii) Suppose that $v\not\in C\cup K$ and $u\prec v$. By
  Lemma~\ref{lemma-deg4}, we have $a(v)\ge 4$, and since all faces
  incident with $v$ have length three, it follows that $v$ has at
  least two neighbors $z_1,z_2\not\in C$ distinct from $u$ such that
  for $i\in \{1,2\}$, either $uvz_i$ is a face, or $z_i$ and $u$ have
  a common neighbor $z'_i\in C$ such that $uvz'_i$ is a face. In
  either case, $z_i$ is not a friend of $u$ via $v$. Since $u$ is not
  a friend of $u$ via $v$, it follows that $u$ has at most $a(v)-3$
  friends via $v$.

  (iii) Let us now additionally assume that $v$ has a neighbor $r$ as
  described in the last case of the lemma. Using the notation from the
  previous case, we first show that the vertex $z_1$ is distinct from
  $r$. This is clearly the case if $z_1$ is adjacent to $u$. Suppose
  then that $z_1$ is not adjacent to $u$, and thus $z_1$ is a neighbor
  of a vertex $z'_1\in C$ such that $uvz'_1$ is a face. But then $u$,
  $v$ and $z_1$ have a common neighbor in $C$, and thus $r\neq
  z_1$. By a symmetric argument, $r\neq z_2$. 

  Since $u\prec r$, the vertex $r$ is not a friend of $u$, and thus
  $u$ has at most $a(v)-4$ friends via $v$.
\end{proof}

\begin{lemma}\label{lemma-redupath}
If $(G,K,C)$ is a minimal counterexample, then $G$ contains no path $P=v_1v_2\ldots v_k$ with $k\ge 2$ disjoint from $K$, such that $v_1$ is a $(5,\le\!1)$-vertex, $v_2,\ldots, v_{k-1}$ are $(6,0)$-vertices,
and $v_k$ is a $(5,\le\!2)$-vertex.
\end{lemma}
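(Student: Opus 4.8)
The plan is to argue by contradiction: suppose a minimal counterexample $(G,K,C)$ contains such a path $P=v_1v_2\ldots v_k$. I would first normalize $P$ so that it is \emph{induced}. Indeed, whenever $v_iv_j\in E(G)$ with $j>i+1$, replacing $P$ by $v_1\ldots v_iv_j\ldots v_k$ yields a strictly shorter path with the same endpoints, whose remaining interior vertices are a subset of the old ones and hence still $(6,0)$-vertices; iterating until no chord remains, I may assume that, apart from consecutive path vertices, no two vertices of $P$ are adjacent (if $P$ collapses to $k=2$ it is treated directly below).

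Next I would build a smaller target by deleting $V(P)$ and re-triangulating the interior face this creates, taking care \emph{not} to add any edge incident with a vertex of $C$. This is possible because the only vertices of $C$ on the boundary of that face are the at most three neighbours of $v_1$ and $v_k$ in $C$, each of which can be cut off by a single triangle. Deletion only removes edges at vertices of $C$, so every vertex of $C$ still has at most four neighbours outside $C$, and the resulting plane graph $G'$ has strictly fewer vertices than $G$; hence $(G',K,C)$ is a target smaller than $(G,K,C)$ and, by minimality, admits a valid ordering $\prec'$. I would then extend $\prec'$ to an ordering $\prec$ of $V(G)\setminus C$ by placing the path at the very top, in the order $v_1\prec v_2\prec\cdots\prec v_k$, and claim $\prec$ is valid.

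The clean part of the verification is to bound $|L_{G,C,\prec}(\cdot)|$ on the path vertices using Lemma~\ref{lemma-omfr}. For $v_k$, the global maximum, every neighbour is smaller or lies in $C$, so by the first item each contributes at most one friend and $|L_{G,C,\prec}(v_k)|\le\deg(v_k)\le 7$. For $v_1$, its four non-$C$ neighbours other than $v_2$ lie in $V(G')$ and are smaller than $v_1$, and its at most one $C$-neighbour contribute at most $5$ friends in total; via $v_2$ (a $(6,0)$-vertex with $v_1\prec v_2$) I would apply the third item of Lemma~\ref{lemma-omfr} with witness $r=v_3$, a non-neighbour of $v_1$ since $P$ is induced, with $v_1\prec v_3$ and $b=0$, giving at most $6-4=2$ further friends, for a total of at most $7$ (for $k=2$ one uses the second item via $v_2=v_k$ instead). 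Each interior $v_i$ is handled identically: $v_{i-1}$ and the four lower non-$C$ neighbours give at most $5$ friends, and via $v_{i+1}$ the third item with $r=v_{i+2}$ (again a non-neighbour of $v_i$) yields at most $2$ more.

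The hard part will be showing that no vertex $w\in V(G')\setminus C$ exceeds budget. The key observation is that the friends of $w$ contributed by its neighbours lying in $V(G')$ are exactly the same in $G$ and in $G'$, so the only possible \emph{new} friends are vertices $u\prec w$ for which $w$ and $u$ have a common neighbour equal to some path vertex $v_i$ (and $w\prec v_i$, which always holds). I would argue that the re-triangulation of the deleted face is set up precisely so that every such pair already satisfies $u\in L_{G',C,\prec'}(w)$, via an added chord or an inherited common neighbour, so that $L_{G,C,\prec}(w)\subseteq L_{G',C,\prec'}(w)$ and hence $|L_{G,C,\prec}(w)|\le 7$. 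Controlling these ``cross-hole'' common-neighbour relations under the uncontrolled ordering $\prec'$ is the main obstacle, and is where the structural information about the link of $P$ (via Lemma~\ref{lemma-nosep} and Lemma~\ref{lemma-deg4}) must be exploited; once it is in place, the validity of $\prec$ contradicts the assumption that $(G,K,C)$ is a counterexample.
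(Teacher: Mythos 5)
Your treatment of the path vertices themselves is essentially the paper's computation and is fine, modulo one slip: for $v_{k-1}$ you cannot take the witness $r=v_{k+1}$ (it does not exist); there you need the second item of Lemma~\ref{lemma-omfr} applied to the $(5,\le\!2)$-vertex $v_k$, which gives the same bound of $2$ friends via $v_k$. The genuine gap is exactly the step you yourself flag as ``the hard part'': after deleting $V(P)$ and retriangulating, nothing guarantees $L_{G,C,\prec}(w)\subseteq L_{G',C,\prec'}(w)$ for vertices $w$ outside the path, and in fact no retriangulation can be ``set up precisely'' to achieve this. Concretely, an interior $(6,0)$-vertex $v_i$ of $P$ has four neighbours $x_1,x_2,x_3,x_4$ lying consecutively on the boundary of the hole; since the path sits on top of $\prec$, every pair of neighbours of $v_i$ becomes a friend-pair in $G$, so all of $\{x_1,x_3\}$, $\{x_2,x_4\}$, $\{x_1,x_4\}$ must be covered in $G'$. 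The chords $x_1x_3$ and $x_2x_4$ cross inside the hole, so any planar retriangulation omits at least one of these pairs, and for an omitted pair you would need a common neighbour in $G'$ that is either in $C$ or above both vertices in $\prec'$ --- but $\prec'$ is produced by minimality applied to $G'$ and is completely outside your control (your own phrase ``uncontrolled ordering'' concedes precisely this). So the deletion approach does not merely have a missing step; the step fails, and no amount of care in choosing the retriangulation repairs it.

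The paper avoids the issue entirely by never deleting the path: it adds $V(P)$ to $C$ instead. The measure $s(G,K,C)=(n,-c,e_C,q,-t,e)$ is designed so that enlarging $C$ yields a smaller target, and each vertex of $P$ has at most four neighbours outside $C\cup V(P)$ (the endpoints are $(5,\le\!2)$-vertices with one path-neighbour, the interior vertices are $(6,0)$-vertices with two), so $(G,K,C\cup V(P))$ is indeed a target. Placing the path on top of a valid ordering of this smaller target gives the exact identity $L_{G,C\cup V(P),\prec}(u)=L_{G,C,\prec}(u)$ for every $u\notin C\cup V(P)$: a common neighbour lying in $V(P)$ counts in the relative version because it belongs to the enlarged $C$, and counts in $G$ because it lies above $u$ in the extended ordering. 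This identity is what replaces your unprovable inclusion; the remaining work, bounding the budgets of the path vertices via Lemma~\ref{lemma-omfr}, is the part you already have.
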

\begin{proof}
Suppose for a contradiction that $G$ contains such a path $P$.  Without loss of generality, $P$ is an induced path.  
Furthermore, $P$ is disjoint from $C$ by Lemma~\ref{lemma-conn}.
Note that each vertex of $P$ has at most $4$ neighbors in $V(G)\setminus (C\cup V(P))$,
and thus $(G,K,C\cup V(P))$ is a target smaller than $(G,K,C)$.  Let $\prec$ be a valid ordering of $(G,K,C\cup V(P))$, and let us extend the ordering to $(G,K,C)$ by setting
$u\prec v_1\prec v_2\prec\ldots\prec v_k$ for every $u\in V(G)\setminus (C\cup V(P))$.  Observe that $L_{G,C\cup V(P),\prec}(u)=L_{G,C,\prec}(u)$ for every $u\in V(G)\setminus (C\cup V(P))$.
By Lemma~\ref{lemma-omfr}, $v_k$ has at most one friend via each of its neighbors, and thus $|L_{G,C,\prec}(v_k)|\le 7$.  The vertex $v_{k-1}$ has at most $2$ friends via $v_k$ and at most one friend via
each of its neighbors distinct from $v_k$, and thus $|L_{G,C,\prec}(v_{k-1})|\le 7$.  Consider any $i=1,\ldots, k-2$.  By Lemma~\ref{lemma-omfr}, the vertex $v_i$ has at most $2$ friends via $v_{i+1}$
(because $v_{i+1}$ is a $(6,0)$-vertex and we can set $r=v_{i+2}$) and at most one friend via each of its neighbors distinct from $v_{i+1}$, and thus $|L_{G,C,\prec}(v_i)|\le 7$.
Therefore, $\prec$ is a valid ordering for $(G,K,C)$, which is a contradiction.
\end{proof}

\begin{lemma}\label{lemma-reducyc}
If $(G,K,C)$ is a minimal counterexample, then $G$ contains no induced cycle $Q=v_1v_2\ldots v_k$ with $k\ge 4$ disjoint from $K$, such that 
$v_k$ is a $(5,\le\!2)$-vertex and $v_1,\ldots, v_{k-1}$ are $(6,0)$-vertices.
\end{lemma}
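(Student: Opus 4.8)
The plan is to mimic the proof of Lemma~\ref{lemma-redupath}, adapting it from a path to a cycle. Assuming for contradiction that $G$ contains such an induced cycle $Q=v_1v_2\ldots v_k$, I would first check that $(G,K,C\cup V(Q))$ is a target smaller than $(G,K,C)$: each $(6,0)$-vertex $v_i$ with $i<k$ has exactly $4$ neighbours outside $C\cup V(Q)$ (two of its six non-$C$ neighbours lie on $Q$), and $v_k$ has only $3$, so the required degree condition holds; since $V(Q)$ is disjoint from $K\cup C$, moving $V(Q)$ into $C$ increases $c$ while keeping $n$ fixed, making the new target lexicographically smaller. I then take a valid ordering $\prec$ of $(G,K,C\cup V(Q))$ and extend it by setting $u\prec v_1\prec v_2\prec\cdots\prec v_k$ for every $u\in V(G)\setminus(C\cup V(Q))$. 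Exactly as in Lemma~\ref{lemma-redupath}, one checks that $L_{G,C,\prec}(u)=L_{G,C\cup V(Q),\prec}(u)$ for every $u$ off the cycle: the cycle vertices sit at the top of $\prec$, so they never serve as a smaller endpoint, and as a common neighbour they contribute identically whether viewed as members of the enlarged $C$ or as larger non-$C$ vertices. Hence these vertices remain fine, and the entire burden falls on bounding $|L_{G,C,\prec}(v_i)|$ by $7$ for each cycle vertex.

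For this I would use Lemma~\ref{lemma-omfr}. For $v_k$ (a $(5,\le\!2)$-vertex) every neighbour is either $\prec v_k$ or in $C$, so by the first bullet it has at most one friend via each neighbour, giving $|L_{G,C,\prec}(v_k)|\le\deg(v_k)\le 7$. For an interior vertex $v_i$ with $2\le i\le k-2$, its upper cycle-neighbour $v_{i+1}$ is a $(6,0)$-vertex, and I apply the third bullet with $r=v_{i+2}$ — non-adjacent to $v_i$ because $Q$ is induced and $k\ge 4$ — to obtain at most $2$ friends via $v_{i+1}$; its lower neighbour $v_{i-1}$ and its four off-cycle neighbours each contribute at most one, so $|L_{G,C,\prec}(v_i)|\le 2+1+4=7$. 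The vertex $v_{k-1}$ needs separate handling: its only upper cycle-neighbour is $v_k$, a $(5,\le\!2)$-vertex, for which the second bullet already yields at most $2$ friends, and together with $v_{k-2}$ and four off-cycle neighbours each giving one, the total is again $7$.

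The crux — and the step I expect to be the main obstacle — is $v_1$, which, unlike in the path case, has \emph{two} cycle-neighbours above it, namely $v_2$ and $v_k$. Via $v_2$ I again invoke the third bullet with $r=v_3$ to get at most $2$ friends. Via $v_k$ I must squeeze out at most $1$ friend rather than the $2$ that the second bullet alone would give; for this I apply the third bullet of Lemma~\ref{lemma-omfr} with $r=v_{k-1}$, a neighbour of $v_k$ non-adjacent to $v_1$ (induced cycle, $k\ge 4$) with $v_1\prec v_{k-1}$. The disjunctive hypothesis of that bullet is satisfied precisely because $v_{k-1}$ is a $(6,0)$-vertex and hence has no neighbour in $C$ — this is the delicate point, since $v_k$ itself may have up to two neighbours in $C$ and thus need not satisfy $b=0$. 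This yields at most $a-4=1$ friend via $v_k$, so $|L_{G,C,\prec}(v_1)|\le 2+1+4=7$. With every cycle vertex bounded by $7$, the ordering $\prec$ is valid for $(G,K,C)$, contradicting that it is a counterexample.
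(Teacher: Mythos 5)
Your proof is correct and follows essentially the same route as the paper: the same reduction to the smaller target $(G,K,C\cup V(Q))$, the same extension of the ordering with $v_1\prec\cdots\prec v_k$ on top, and the same applications of Lemma~\ref{lemma-omfr}, including the key step of handling $v_1$'s second upper neighbor $v_k$ via the third bullet with $r=v_{k-1}$ (the paper likewise relies on $v_{k-1}$ being a $(6,0)$-vertex so that the disjunctive hypothesis holds despite $v_k$ possibly having neighbors in $C$).
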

\begin{proof}
Suppose for a contradiction that $G$ contains such an induced cycle $Q$.
Clearly, $Q$ is disjoint from $C$ by Lemma~\ref{lemma-conn}.
Note that each vertex of $Q$ has at most $4$ neighbors in $V(G)\setminus (C\cup V(Q))$,
and thus $(G,K,C\cup V(Q))$ is a target smaller than $(G,K,C)$.  Let $\prec$ be a valid ordering of $(G,K,C\cup V(Q))$, and let us extend the ordering to $(G,K,C)$ by setting
$u\prec v_1\prec v_2\prec\ldots\prec v_k$ for every $u\in V(G)\setminus (C\cup V(Q))$.  Observe that $L_{G,C\cup V(Q),\prec}(u)=L_{G,C,\prec}(u)$ for every $u\in V(G)\setminus (C\cup V(Q))$.
By Lemma~\ref{lemma-omfr}, $v_k$ has at most one friend via each of its neighbors, and thus $|L_{G,C,\prec}(v_k)|\le 7$.  The vertex $v_{k-1}$ has at most $2$ friends via $v_k$ and at most one friend via
each of its neighbors distinct from $v_k$, and thus $|L_{G,C,\prec}(v_{k-1})|\le 7$.  Consider any $i=2,\ldots, k-2$.  By Lemma~\ref{lemma-omfr}, the vertex $v_i$ has at most $2$ friends via $v_{i+1}$
(because $v_{i+1}$ is a $(6,0)$-vertex and we can set $r=v_{i+2}$) and at most one friend via each of its neighbors distinct from $v_{i+1}$, and thus $|L_{G,C,\prec}(v_i)|\le 7$.
Finally, the $(6,0)$-vertex $v_1$ has at most two friends via $v_2$, at most one friend via $v_k$ (since we can set $r=v_{k-1}$), and at most one friend via each of its neighbors distinct from
$v_2$ and $v_k$, and thus $|L_{G,C,\prec}(v_1)|\le 7$.  Therefore, $\prec$ is a valid ordering for $(G,K,C)$, which is a contradiction.
\end{proof}

\begin{lemma}\label{lemma-redu61path}
If $(G,K,C)$ is a minimal counterexample, then $G$ contains no path $P=v_1v_2\ldots v_k$ with $k\ge 3$ disjoint from $K$,
such that $v_1$ is a $(5,\le\!1)$-vertex, $v_2,\ldots, v_{k-2}$ are $(6,0)$-vertices (if $k\ge 4$), $v_{k-1}$ is a $(6,1)$-vertex and $v_k$ is a $(5,0)$-vertex.
\end{lemma}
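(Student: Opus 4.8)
The plan is to mimic the reduction strategy used in Lemmas~\ref{lemma-redupath} and~\ref{lemma-redu61path}: assume such a path $P=v_1v_2\ldots v_k$ exists, add its vertices to the ``already-ordered'' set by forming the target $(G,K,C\cup V(P))$, and show this is a valid smaller target that can be extended. As in the previous lemmas, I would first note that each $v_i$ has at most $4$ neighbors outside $C\cup V(P)$ (since $v_1$ is a $(5,\le\!1)$-vertex with one neighbor $v_2$ on $P$, the $(6,0)$-vertices have two neighbors on $P$, etc.), so $(G,K,C\cup V(P))$ is indeed a target and is smaller than $(G,K,C)$. I would take its valid ordering $\prec$ and extend it by placing $u\prec v_1\prec v_2\prec\cdots\prec v_k$ for all $u\in V(G)\setminus(C\cup V(P))$, observing that $L_{G,C\cup V(P),\prec}(u)=L_{G,C,\prec}(u)$ for every such $u$, so only the vertices of $P$ need checking.

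The core of the argument is the friend-counting for each $v_i$, using Lemma~\ref{lemma-omfr}. The endpoint $v_k$ is a $(5,0)$-vertex all of whose neighbors precede it or lie in $C$, so it has at most one friend via each of its at most $7$ neighbors, giving $|L_{G,C,\prec}(v_k)|\le 7$ (here the count is $5+0$ effective neighbors, comfortably under the bound). For $v_{k-1}$, a $(6,1)$-vertex, its larger successor is $v_k$: since $v_k$ is a $(5,0)$-vertex and $v_{k-1}\prec v_k$, the third bullet of Lemma~\ref{lemma-omfr} applies with $r=v_{k-2}$ (using that $v_k$ has no neighbor in $C$), yielding at most $5-4=1$ friend via $v_k$; all remaining neighbors give at most one friend each, so $|L_{G,C,\prec}(v_{k-1})|\le 7$. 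The interior $(6,0)$-vertices $v_i$ with $2\le i\le k-2$ behave exactly as in Lemma~\ref{lemma-redupath}: at most two friends via $v_{i+1}$ (a $(6,0)$-vertex, setting $r=v_{i+2}$) and one via each other neighbor, giving the bound $7$.

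The only vertex requiring genuine care is $v_1$, the $(5,\le\!1)$-vertex. It has $v_2\succ v_1$ as a neighbor; since $v_2$ is a $(6,0)$-vertex, the second bullet of Lemma~\ref{lemma-omfr} gives at most $6-3=3$ friends via $v_2$. Naively this plus the at-most-one friend via each of the four remaining neighbors of $v_1$ yields $3+4=7$, which is exactly the bound—so the delicate point is to confirm that the third bullet actually sharpens the count via $v_2$ to at most $2$, or that the arithmetic does not overshoot. Here I would invoke that $v_2$ has a neighbor $r=v_3$ with $v_1\prec v_3$ and $v_3$ not adjacent to $v_1$ (using that $P$ is induced, which we may assume without loss of generality), and since $v_2$ is a $(6,0)$-vertex it has no neighbor in $C$, so the third bullet of Lemma~\ref{lemma-omfr} applies to give at most $6-4=2$ friends via $v_2$. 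Combined with at most one friend via each of the other four neighbors, this gives $|L_{G,C,\prec}(v_1)|\le 2+4=6\le 7$. \textbf{The main obstacle} I anticipate is precisely verifying the applicability of this sharper third-bullet estimate for $v_1$—ensuring $v_3$ is genuinely non-adjacent to $v_1$ and that the ``no neighbor in $C$'' hypothesis is met—since the whole point of introducing the $(6,1)$-vertex $v_{k-1}$ and the $(5,0)$-endpoint $v_k$ (rather than a $(5,\le\!2)$-vertex as in Lemma~\ref{lemma-redupath}) is to make the constraints tight enough that the $v_1$ count still closes at $7$. Once all $k$ inequalities are established, $\prec$ is valid for $(G,K,C)$, contradicting that it is a counterexample.
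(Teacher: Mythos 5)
There is a genuine gap, and it sits at the vertex you treat most casually: $v_{k-1}$. With your ordering $u\prec v_1\prec\cdots\prec v_k$, the $(6,1)$-vertex $v_{k-1}$ has seven neighbors: $v_{k-2}$, its neighbor in $C$, four neighbors outside $C\cup V(P)$ (all of which precede it), and $v_k$, which succeeds it. Via each of the six neighbors that precede it or lie in $C$ it can have one friend, so you need at most one friend via $v_k$ to stay within $7$. Your claim of ``at most $5-4=1$ friend via $v_k$'' applies the third bullet of Lemma~\ref{lemma-omfr} with $r=v_{k-2}$, but that bullet requires $r$ to be a \emph{neighbor of $v$}, here $v=v_k$, and $v_{k-2}$ is not adjacent to $v_k$ precisely because $P$ is induced. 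In fact no admissible $r$ exists: every neighbor of $v_k$ other than $v_{k-1}$ lies outside $C\cup V(P)$ and hence precedes $v_{k-1}$, whereas the lemma needs $v_{k-1}\prec r$. So only the second bullet applies, giving at most $5-3=2$ friends via $v_k$, and the bound for $v_{k-1}$ becomes $2+6=8>7$. This is not a bookkeeping slip: the two friends via $v_k$, the friend via the $C$-neighbor, and the five preceding neighbors are pairwise distinct in general, so the natural ordering genuinely fails. (Your stated worry about $v_1$ is misplaced; that count closes comfortably in either ordering.)

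The paper's proof uses the same reduction but a different extension of $\prec$: it sets $u\prec v_1\prec\cdots\prec v_{k-2}\prec v_k\prec v_{k-1}$, i.e., the $(6,1)$-vertex is placed \emph{last}. Then $v_{k-1}$ has all neighbors preceding it or in $C$, hence at most one friend via each of its seven neighbors; $v_k$ has at most $3$ friends via $v_{k-1}$ (second bullet, $6-3$) plus one via each of its four remaining neighbors, again exactly $7$; and each $v_i$ with $i\le k-2$ has at most $2$ friends via $v_{i+1}$ by the third bullet with $r=v_{i+2}$ -- where for $i=k-2$ one uses that $r=v_k$ is a $(5,0)$-vertex and so has no neighbor in $C$ (note this same point also repairs your treatment of $v_{k-2}$ and of $v_1$ when $k=3$, since there $v_{i+1}$ is the $(6,1)$-vertex, not a $(6,0)$-vertex as you assert). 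This swap at the end of the ordering is exactly the idea your proposal is missing, and it is the reason the lemma needs its own proof rather than following the ordering of Lemma~\ref{lemma-redupath} verbatim.
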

\begin{proof}
Suppose for a contradiction that $G$ contains such a path $P$.  Without loss of generality, $P$ is an induced path ($v_k$ has no neighbors in $P$ distinct from $v_{k-1}$ by Lemma~\ref{lemma-redupath}).
Note that $P$ is disjoint from $C$, and each vertex of $P$ has at most $4$ neighbors in $V(G)\setminus (C\cup V(P))$,
and thus $(G,K,C\cup V(P))$ is a target smaller than $(G,K,C)$.  Let $\prec$ be a valid ordering of $(G,K,C\cup V(P))$, and let us extend the ordering to $(G,K,C)$ by setting
$u\prec v_1\prec v_2\prec\ldots\prec v_{k-2}\prec v_k\prec v_{k-1}$ for every $u\in V(G)\setminus (C\cup V(P))$.  Observe that $L_{G,C\cup V(P),\prec}(u)=L_{G,C,\prec}(u)$ for every $u\in V(G)\setminus (C\cup V(P))$.
By Lemma~\ref{lemma-omfr}, $v_{k-1}$ has at most one friend via each of its neighbors, and thus $|L_{G,C,\prec}(v_{k-1})|\le 7$.  The vertex $v_k$ has at most $3$ friends via $v_{k-1}$ and at most one friend via
each of its neighbors distinct from $v_{k-1}$, and thus $|L_{G,C,\prec}(v_k)|\le 7$.  Consider any $i=1,\ldots, k-2$.  By Lemma~\ref{lemma-omfr}, the vertex $v_i$ has at most $2$ friends via $v_{i+1}$
(because we can set $r=v_{i+2}$ and either $v_{i+1}$ is a $(6,0)$-vertex, or $r$ is a $(5,0)$-vertex) and at most one friend via each of its neighbors distinct from $v_{i+1}$, and thus $|L_{G,C,\prec}(v_i)|\le 7$.
Therefore, $\prec$ is a valid ordering for $(G,K,C)$, which is a contradiction.
\end{proof}


\section{Discharging}
\label{sec:discharging}

Let us now proceed with the discharging phase of the proof.  Let $(G,K,C)$ be a minimal counterexample.  Let us assign charge $c'_0(v)=10\deg (v)-60$ to each vertex $v\in V(G)$.  Since all faces of
$G$ except possibly for the outer one have length three, we have
$|E(G)|=3|V(G)|-3-|K|$, and thus
\begin{align*}
  \sum_{v\in V(G)} c'_0(v)&=-60|V(G)|+10\sum_{v\in V(G)} \deg(v)\\&=-60|V(G)|+20|E(G)|=-60-20|K|.  
\end{align*}
Next, every vertex of $v\in V(G)\setminus C$ sends charge of $5$ to every adjacent vertex in $C$,
and let $c_0$ denote the resulting assignment of charge.  Since the total amount of charge does not change, we have $\sum_{v\in V(G)} c_0(v)=-60-20|K|$.  If $v\in C$, then $\deg(v)=4$, $c'_0(v)=-20$,
and $v$ receives $5$ from each of its neighbors, and thus $c_0(v)=0$.  An $(a,b)$-vertex $v\in V(G)\setminus C$ has $c'_0(v)=10a+10b-60$ and $v$ sends $5$ to $b$ of its neighbors, and thus $c_0(v)=10a+5b-60$.

We say that a vertex $v\in V(G)\setminus C$ is \emph{big} if $v\in K$ or $c_0(v)>0$ (i.e., $v$ is not a $(4,4)$-vertex, a $(5,\le\!2)$-vertex, or a $(6,0)$-vertex).
We call vertices not belonging to $K$ \emph{internal}.
Next, we redistribute the charge according to the following rules, obtaining the \emph{final charge} $c$.
\begin{description}
\item[R1] Every big vertex sends $2$ to each neighboring internal $(5,0)$-vertex.
\item[R2] Every big vertex sends $1$ to each neighboring internal $(5,1)$-vertex.
\item[R3] If $v_1v_2\ldots v_k$ with $k\ge 3$ is a path in $G$ such that $v_1xv_2$, $v_2xv_3$, \ldots, $v_{k-1}xv_k$ are faces for some vertex $x$,
$v_1$ is big, $x$ is either big or an internal $(6,0)$-vertex, $v_2$, \ldots, $v_{k-1}$ are internal $(6,0)$-vertices, and $v_k$ is an internal $(5,\le\!1)$-vertex, then $v_1$ sends $1$ to $v_k$.
\end{description}
In the case of rule R3, we say that the charge \emph{arrives to $v_k$ through pair $(v_{k-1},x)$}, and \emph{departs $v_1$ through pair $(v_2,x)$}.
Note that it is possible for charge to arrive through $(x,v_{k-1})$ or depart through $(x,v_2)$ as well, if $x$ is an internal $(6,0)$-vertex.
If the charge departs through both $(v_2,x)$ and $(x,v_2)$, we say that the edge $v_2x$ is \emph{heavy for $v_1$}.
The key observations concerning the rule R3 are the following.

\begin{lemma}\label{lemma-arrive}
Let $(G,K,C)$ be a minimal counterexample, let $v$ be an internal $(5,\le\!1)$-vertex, and let $vu_1x$ be a face of $G$.  If $u_1$ is an internal $(6,0)$-vertex, then
charge arrives to $v$ through $(u_1,x)$.
\end{lemma}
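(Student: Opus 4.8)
The plan is to exhibit an application of rule R3 whose relevant pair is exactly $(u_1,x)$. Concretely, I would walk around $x$ starting from the face $vu_1x$ and passing through consecutive triangular faces $w_iw_{i+1}x$ (setting $w_0=v$, $w_1=u_1$), recording the neighbors $w_1,w_2,\dots$ of $x$ in cyclic order, continuing as long as they are internal $(6,0)$-vertices. The aim is to show that this walk first leaves the set of internal $(6,0)$-vertices at a \emph{big} vertex $w_j$; the reversed path $w_jw_{j-1}\dots w_1v$ then satisfies the hypotheses of R3 with center $x$, so charge arrives to $v$ through $(u_1,x)$.

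First I would pin down the status of $x$. Since $u_1$ is a $(6,0)$-vertex it has no neighbor in $C$, so $x\notin C$. Because $v$ is a $(5,\le 1)$-vertex adjacent to $x$, Lemma~\ref{lemma-redupath} applied with $k=2$ forbids $x$ from being an internal $(5,\le 2)$-vertex, and Corollary~\ref{cor-non44} forbids $x$ from being an internal $(4,4)$-vertex. Hence $x$ is either big or an internal $(6,0)$-vertex, which is precisely what R3 requires of its center.

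Next I would analyze termination of the walk, splitting into the cases $x$ internal and $x\in K$. When $x$ is internal, all incident faces are triangles, so the neighbors of $x$ form a closed cyclic sequence, and by Lemma~\ref{lemma-nosep} there are no relevant parallel edges and no non-facial triangle avoiding $C$; thus any chord of this link would create a forbidden non-facial triangle with $x$, so the link is an induced cycle. If every neighbor of $x$ other than $v$ were an internal $(6,0)$-vertex, this induced cycle (with $v$ a $(5,\le 2)$-vertex, all others $(6,0)$, and disjoint from $K\cup C$) would contradict Lemma~\ref{lemma-reducyc}. So the walk meets a non-$(6,0)$ vertex before returning to $v$; let $w_j$ (with $j\ge 2$, as $w_1=u_1$ is $(6,0)$) be the first such. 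When instead $x\in K$, the link is a path whose endpoints lie in $K$, so the walk reaches a non-$(6,0)$ vertex automatically, with no wrap-around and no need for Lemma~\ref{lemma-reducyc}.

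It then remains to see that $w_j$ is big. As a neighbor of the $(6,0)$-vertex $w_{j-1}$ it is not in $C$; Corollary~\ref{cor-non44} rules out $w_j$ being $(4,4)$; and since $w_0,\dots,w_j$ are distinct neighbors of $x$ met before any wrap-around, the genuine path $vw_1\dots w_{j-1}w_j$ would violate Lemma~\ref{lemma-redupath} were $w_j$ an internal $(5,\le 2)$-vertex. Thus $w_j$ is neither in $C$ nor an internal $(4,4)$-, $(5,\le 2)$-, or $(6,0)$-vertex, so it is big (the possibility $w_j\in K$ is harmless, as $K$-vertices are big), and R3 applies to $w_jw_{j-1}\dots w_1v$ with center $x$. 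The step I expect to be most delicate is the termination argument: confirming that the link of $x$ is a genuine induced cycle so that Lemma~\ref{lemma-reducyc} is applicable, and cleanly separating this from the case $x\in K$, where wrap-around cannot occur.
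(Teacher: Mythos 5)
Your proof is correct and takes essentially the same approach as the paper's: walk around $x$ through consecutive triangular faces, classify $x$ as big or internal $(6,0)$ via Lemma~\ref{lemma-redupath} and Corollary~\ref{cor-non44}, exclude the all-$(6,0)$ wrap-around configuration via Lemma~\ref{lemma-nosep} and Lemma~\ref{lemma-reducyc}, and show the first non-$(6,0)$ stopping vertex is big so that rule R3 delivers charge through $(u_1,x)$. The only difference is organizational: the paper runs a single contradiction argument (allowing the walk to possibly wrap around to $u_k=v$ and then refuting that case, noting $x$ would have no big neighbor and degree at least $6$), whereas you rule out wrap-around up front with an explicit case split on whether $x\in K$.
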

\begin{proof}
By Lemma~\ref{lemma-redupath}, $x$ is not an internal $(5,\le\!2)$-vertex, and by Corollary~\ref{cor-non44}, $x$ is not a $(4,4)$-vertex.
Hence, $x$ is either big or an internal $(6,0)$-vertex.

Let $vu_1x$, $u_1u_2x$, $u_2u_3x$, \ldots, $u_{k-1}u_kx$ be faces of $G$ incident with $x$ in order, where $k\ge 2$ is chosen minimum such that $u_k$ is not an internal $(6,0)$-vertex (possibly $u_k=v$).
If $u_k$ is big, then it sends charge to $v$ by R3 and this charge arrives through $(u_1,x)$.  Hence, assume that $u_k$ is not big.  Since $u_{k-1}$ is a $(6,0)$-vertex, Corollary~\ref{cor-non44}
implies that $u_k$ is not a $(4,4)$-vertex.  Therefore, $u_k$ is an internal $(5,\le\!2)$-vertex.  By Lemma~\ref{lemma-redupath}, it follows that $u_k=v$.
Since $x$ does not have a big neighbor, $x$ is an internal vertex. Since $x$ is internal big or $(6,0)$-vertex,
its degree is at least $6$, and thus $k\ge 6$.  However, Lemma~\ref{lemma-nosep} implies that $vu_1u_2\ldots u_{k-1}$ is an induced cycle, which contradicts Lemma~\ref{lemma-reducyc}.
\end{proof}

\begin{lemma}\label{lemma-depart}
  Let $(G,K,C)$ be a minimal counterexample, let $v$ be a big vertex,
  and let $vu_1u_2$, $vu_2u_3$, and $vu_3u_4$ be distinct faces of
  $G$.
\begin{itemize}
\item If $u_1u_2$ is heavy for $v$, and $u_1u_2w$ is the face of $G$ with $w\neq v$, then
$w$ is an internal $(5,\le\!1)$-vertex.  Furthermore, no charge departs $v$ through $(u_2,u_3)$, and $u_3u_4$ is not heavy for $v$.
\item If $u_1$ is an internal $(5,\le\!1)$-vertex, then charge does not depart $v$ through $(u_2,u_3)$.
\item If $v$ is an internal $(6,1)$-vertex adjacent to an internal $(5,0)$-vertex and charge departs $v$ through
$(u_1,u_2)$, then $u_3$ is an internal $(5,0)$-vertex.
\end{itemize}
\end{lemma}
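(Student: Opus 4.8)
The plan is to translate each instance of ``charge departs $v$ through a pair'' back into the fan that produced it under rule~R3, and then to glue such fans (possibly after prepending or appending a single low-degree neighbour) into a path forbidden by one of Lemmas~\ref{lemma-redupath}, \ref{lemma-reducyc} or~\ref{lemma-redu61path}. Concretely, if charge departs $v$ through $(a,b)$ then there is a fan $v=q_1,q_2,\dots,q_\ell$ (with $q_2=a$ and hub $b$) in which $q_2,\dots,q_{\ell-1}$ are internal $(6,0)$-vertices and $q_\ell$ is an internal $(5,\le\!1)$-vertex; in particular $a$ is always an internal $(6,0)$-vertex. All vertices produced this way are internal and avoid $C$, so the paths we build are automatically disjoint from $K\cup C$.

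For the first part, heaviness of $u_1u_2$ means charge departs $v$ through both $(u_1,u_2)$ and $(u_2,u_1)$; reading off the two fans shows that both $u_1$ and $u_2$ are internal $(6,0)$-vertices. First I would check that the third vertex of each fan is exactly $w$: the second face of each fan shares the edge $u_1u_2$ with the first and is distinct from $vu_1u_2$, hence equals $u_1u_2w$. If $w$ were not an internal $(5,\le\!1)$-vertex it would be an internal $(6,0)$-vertex and both fans would have length at least $4$; gluing their tails at $w$ then yields a path (or, if the tails meet, a cycle) with $(5,\le\!1)$ ends and $(6,0)$ interior, contradicting Lemma~\ref{lemma-redupath} (resp.\ Lemma~\ref{lemma-reducyc}). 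Thus $w$ is an internal $(5,\le\!1)$-vertex adjacent to the $(6,0)$-vertex $u_2$. For the two ``furthermore'' claims I would argue by contradiction: a departure through $(u_2,u_3)$ gives a fan $v,u_2,\dots,z$ around $u_3$, and prefixing $w$ produces the forbidden path $w,u_2,\dots,z$; and if $u_3u_4$ were heavy then (as above) $u_3$ is $(6,0)$ and charge departs through $(u_3,u_4)$, so the path $w,u_2,u_3,\dots$ is again forbidden by Lemma~\ref{lemma-redupath}.

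The second part is the quickest: a departure through $(u_2,u_3)$ makes $u_2$ an internal $(6,0)$-vertex and yields a fan $v,u_2,\dots,z$ around $u_3$ with $z$ an internal $(5,\le\!1)$-vertex. Since $u_1$ is an internal $(5,\le\!1)$-vertex adjacent to $u_2$ via the face $vu_1u_2$, the path $u_1,u_2,\dots,z$ has $(5,\le\!1)$ ends and $(6,0)$ interior, contradicting Lemma~\ref{lemma-redupath}.

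For the third part, note that a $(6,1)$-vertex is big. Suppose charge departs $v$ through $(u_1,u_2)$, giving a fan $v,u_1,\dots,z$ around hub $u_2$ with $z$ an internal $(5,\le\!1)$-vertex and $u_1$ together with the interior vertices internal $(6,0)$-vertices. Reversing this fan and appending the hypothesised internal $(5,0)$-neighbour $s$ of $v$ gives the sequence $z,\dots,u_1,v,s$, which matches the forbidden pattern of Lemma~\ref{lemma-redu61path} (with $v$ as the $(6,1)$-vertex and $s$ as the terminal $(5,0)$-vertex). The hard part — and where the hypothesis that $u_3$ is not a $(5,0)$-vertex is used — is showing that this sequence is a genuine simple path, i.e.\ that $s$ does not already lie on the fan. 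If it did, $s$ would be a common neighbour of $v$ and the hub $u_2$; by Lemma~\ref{lemma-nosep} the triangle $vu_2s$ would then be facial, forcing $s\in\{u_1,u_3\}$. But $s\neq u_1$ (one is a $(6,0)$- and the other a $(5,0)$-vertex) and $s\neq u_3$ by hypothesis, so $s$ avoids the fan and the path is simple; Lemma~\ref{lemma-redu61path} then gives the contradiction, completing the argument.
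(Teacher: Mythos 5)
Your overall framework---unfolding each departure of charge into its R3 fan and gluing fans into configurations forbidden by Lemmas~\ref{lemma-redupath}, \ref{lemma-reducyc} and \ref{lemma-redu61path}---is exactly the paper's, and your third bullet is essentially the paper's proof, including the one step you correctly identify as ``the hard part'': ruling out, via Lemma~\ref{lemma-nosep} and the hypothesis on $u_3$, that the appended $(5,0)$-neighbour already lies on the fan. The trouble is that you apply this level of care \emph{only} in the third bullet. In the second bullet, the fan around $u_3$ may terminate exactly at $u_1$; then $u_1,u_2,\dots,z$ is not a path at all and Lemma~\ref{lemma-redupath} says nothing. This coincidence is where the paper does real work: $z=u_1$ makes $u_1$ adjacent to the hub $u_3$, so $u_1u_2u_3$ is a triangle, which is facial by Lemma~\ref{lemma-nosep}, forcing $u_2$ to have degree three---contradicting that $u_2$ is a $(6,0)$-vertex. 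Your write-up is silent on this case, and it cannot be waved away, since nothing you have said excludes it.

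The same defect, in worse form, affects the first bullet. When the two tails meet, your fallback ``a cycle, contradicting Lemma~\ref{lemma-reducyc}'' does not work: (i) if they meet at a common interior $(6,0)$-vertex, the resulting cycle consists entirely of $(6,0)$-vertices, and Lemma~\ref{lemma-reducyc} requires one vertex to be a $(5,\le\!2)$-vertex; (ii) if they meet at $x_1=x_2$ with short tails, the ``cycle'' has length $2$ or $3$, below the lemma's threshold $k\ge 4$; (iii) in every case Lemma~\ref{lemma-reducyc} applies only to \emph{induced} cycles, which you never verify. The paper sidesteps all of this: it concatenates the two full fans across the edge $u_1u_2$, extracts a simple path to get $x_1=x_2$ from Lemma~\ref{lemma-redupath} alone (path extraction absorbs any coincidences), and then uses Lemma~\ref{lemma-nosep} on the triangle $u_1u_2x_1$ to conclude $w=x_1$; meetings of the tails are thereby subsumed rather than case-analysed, and the residual coincidences are killed by facial-triangle arguments, not by Lemma~\ref{lemma-reducyc}. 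Likewise both of your ``furthermore'' arguments omit the coincidence case in which the fan's terminal vertex equals $w$: there the contradiction is again not a forbidden path but a facial triangle ($u_2u_3w$, respectively $u_2u_3w'$ with $w'=w$) forcing $u_2$ (and $u_3$) to have degree $4$. In short: right strategy, correct third bullet, but the first two bullets each have a genuinely missing case whose resolution requires Lemma~\ref{lemma-nosep} plus a degree count rather than the path/cycle lemmas you cite.
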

\begin{proof}
Suppose that charge departs $v$ through both $(u_1,u_2)$ and $(u_2, u_1)$.  By the assumptions of the rule R3, both $u_1$ and $u_2$ are internal $(6,0)$-vertices.
For $i=1,2$, there exists a path starting in $u_i$, passing through internal $(6,0)$-vertices adjacent to $u_{3-i}$, and ending in an internal $(5,\le\!1)$-vertex $x_i$
adjacent to $u_{3-i}$.  By Lemma~\ref{lemma-redupath}, we have $x_1=x_2$.  Hence, $u_1u_2x_1$ is a triangle, and by Lemma~\ref{lemma-nosep}, we have $w=x_1=x_2$.
\begin{itemize}
\item Suppose that in this situation, charge departs through $(u_2,u_3)$ because of a path in the neighborhood of $u_3$
ending in an internal $(5,\le\!1)$-vertex $x$.  By Lemma~\ref{lemma-redupath}, we have $x=w$, and by Lemma~\ref{lemma-nosep}, $u_2u_3w$ bounds a face.
However, then $u_2$ has degree $4$, which is a contradiction since $u_2$ is a $(6,0)$-vertex.
\item Suppose that in this situation, $u_3u_4$ is heavy for $v$.  Then the vertex $w'\neq v$ of the face $u_3u_4w'$ is an internal $(5,\le\!1)$-vertex,
and by Lemma~\ref{lemma-redupath}, we have $w=w'$.  By Lemma~\ref{lemma-nosep}, it follows that $u_2$ and $u_3$ have degree $4$, which is a contradiction, since they are $(6,0)$-vertices.
\end{itemize}

Suppose now that $u_1$ is an internal $(5,\le\!1)$-vertex, and that charge departs $v$ through $(u_2,u_3)$ because of a path in the neighborhood of $u_3$
ending in an internal $(5,\le\!1)$-vertex $x$.  By Lemma~\ref{lemma-redupath}, we have $x=u_1$.  But then $u_3$ is adjacent to $x$, and Lemma~\ref{lemma-nosep}
would imply that $u_1u_2u_3$ is a face and $u_2$ has degree three, which is a contradiction.

Suppose that $v$ is an internal $(6,1)$-vertex adjacent to an internal $(5,0)$-vertex $z$ and that
charge departs $v$ through $(u_1,u_2)$ because of a path in the neighborhood of $u_2$ ending in an internal $(5,\le\!1)$-vertex $x$.  By Lemma~\ref{lemma-redu61path},
we have $x=z$.  But then $u_2$ is adjacent to $z$, and the triangle $u_2vz$ bounds a face by Lemma~\ref{lemma-nosep}.  Hence, $z=u_3$.
\end{proof}

Let us now analyze the final charge of the vertices of $G$.

\begin{lemma}\label{lemma-50}
Let $(G,K,C)$ be a minimal counterexample.  If $v$ is an internal $(5,0)$-vertex of $G$, then $c(v)\ge 0$.
\end{lemma}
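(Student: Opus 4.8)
The plan is to begin from the initial charge of an internal $(5,0)$-vertex $v$, namely $c_0(v)=10\cdot 5+5\cdot 0-60=-10$, and to show that $v$ recovers exactly $10$ units through the rules, so that $c(v)=0$. The first observation is that $v$ emits nothing: rules R1--R3 only move charge out of big vertices, $v$ is not big, and $v$ has no neighbor in $C$ since $b=0$; moreover R2 targets only $(5,1)$-vertices, so the only incoming channels are R1 (from big neighbors) and R3 (arrivals through $(6,0)$-neighbors). It therefore suffices to show that $v$ receives at least $10$.

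First I would determine the possible types of the five neighbors of $v$. As $v$ is internal, every face at $v$ is a triangle (Lemma~\ref{lemma-conn}), and by Lemma~\ref{lemma-nosep} the vertex $v$ is incident with no parallel edge, so its neighborhood is a $5$-cycle $u_1u_2\ldots u_5$ with $u_iu_{i+1}\in E(G)$ (indices taken modulo $5$); since $b=0$, none of the $u_i$ lies in $C$. By Lemma~\ref{lemma-deg4} and the definition of a big vertex, each $u_i$ is either big (this includes the case $u_i\in K$) or a $(4,4)$-, $(5,\le\!2)$-, or $(6,0)$-vertex. A neighbor cannot be a $(4,4)$-vertex: applying Corollary~\ref{cor-non44} to such a neighbor (for which $a=b=4$) would force its neighbor $v$ to have at least $2$ neighbors in $C$, contradicting $b=0$. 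A neighbor cannot be an internal $(5,\le\!2)$-vertex either, because $v$ is a $(5,\le\!1)$-vertex and the single edge $vu_i$ would then be a forbidden configuration of Lemma~\ref{lemma-redupath} in the case $k=2$. Hence every $u_i$ is big or an internal $(6,0)$-vertex; write $p$ for the number of big neighbors and $q=5-p$ for the number of $(6,0)$-neighbors.

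Next I would tally the charge reaching $v$. Each big neighbor sends $2$ by R1, contributing $2p$. For a fixed $(6,0)$-neighbor $u_i$, the edge $vu_i$ lies on exactly the two faces $vu_{i-1}u_i$ and $vu_iu_{i+1}$; applying Lemma~\ref{lemma-arrive} once with hub $u_{i-1}$ and once with hub $u_{i+1}$ shows that charge arrives to $v$ through each of the two distinct pairs $(u_i,u_{i-1})$ and $(u_i,u_{i+1})$. Thus every $(6,0)$-neighbor accounts for $2$ units of R3 charge, for a total of $2q$. Altogether $v$ receives $2p+2q=2(p+q)=10$, whence $c(v)=-10+10=0\ge 0$, as claimed.

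The step requiring the most care is the R3 bookkeeping: I must verify that the two arrivals charged to each $(6,0)$-neighbor are genuinely distinct and never coincide with R1. Distinctness holds because the two arrival pairs have different second coordinates ($u_{i-1}\neq u_{i+1}$ as $\deg v=5$), and R3 charge is indexed by arrival pairs whose first coordinate is always a $(6,0)$-neighbor, so a big neighbor never produces an arrival; consequently the R1 and R3 contributions cannot overlap. The remaining points to confirm are merely that the hypotheses of Lemma~\ref{lemma-arrive} hold at each incident face (they ask only that $v$ be a $(5,\le\!1)$-vertex and that the chosen $u_i$ be an internal $(6,0)$-vertex) and that $v$ indeed emits no charge, both of which are immediate.
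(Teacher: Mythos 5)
Your proof is correct and follows essentially the same route as the paper: classify each of the five neighbors as big or internal $(6,0)$ (via Corollary~\ref{cor-non44} and Lemma~\ref{lemma-redupath}), collect $2$ per big neighbor from R1, and use Lemma~\ref{lemma-arrive} to obtain an arrival through each of the remaining ordered pairs, totalling $10$ units against the deficit $c_0(v)=-10$. The extra bookkeeping you supply (distinctness of arrival pairs, non-overlap of R1 and R3) is implicit in the paper's count of $10-2b$ pairs but is verified correctly.
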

\begin{proof}
We have $c_0(v)=-10$.

By Corollary~\ref{cor-non44} and Lemma~\ref{lemma-redupath}, every neighbor of $v$ in $G$ is either big or an internal $(6,0)$-vertex.
Suppose that $v$ is adjacent to $\beta$ big vertices; each of them sends $2$ to $v$ by the rule R1.
By Lemma~\ref{lemma-arrive}, charge arrives to $v$ through $10-2\beta$ pairs.  Hence,
$c(v)=c_0(v)+2\beta+(10-2\beta)=0$.
\end{proof}

\begin{lemma}\label{lemma-51}
Let $(G,K,C)$ be a minimal counterexample.  If $v$ is an internal $(5,1)$-vertex of $G$, then $c(v)\ge 0$.
\end{lemma}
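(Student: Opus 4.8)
The plan is to mirror the charge count used for $(5,0)$-vertices in Lemma~\ref{lemma-50}, adjusting for the single neighbor of $v$ in $C$. First I would record the initial charge: since $v$ is a $(5,1)$-vertex we have $c_0(v)=10\cdot 5+5\cdot 1-60=-5$, so it suffices to show that $v$ receives at least $5$ units of charge overall. Because $v$ is an internal non-big vertex, it sends nothing under R1--R3 (the $5$ units it passes to its neighbor $w\in C$ are already absorbed into $c_0$), so the only incoming charge is $1$ from each big neighbor via R2 together with $1$ through each pair via R3.

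Next I would pin down the neighborhood of $v$. The vertex $v$ has degree $6$, with exactly one neighbor $w\in C$ (of degree $4$ by Lemma~\ref{lemma-conn}) and five neighbors outside $C$; since $v$ is internal, all six incident faces are triangles and $v$ is not incident with parallel edges (Lemma~\ref{lemma-nosep}). I would then argue, exactly as for $(5,0)$-vertices, that each of the five non-$C$ neighbors is either big or an internal $(6,0)$-vertex: a neighboring $(4,4)$-vertex is ruled out by the second part of Corollary~\ref{cor-non44} (which would force $v$ to have at least two neighbors in $C$), and a neighboring internal $(5,\le\!2)$-vertex is ruled out by Lemma~\ref{lemma-redupath} applied to the one-edge path $vu$.

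The heart of the argument is to show that each of the five non-$C$ neighbors contributes at least one unit of charge to $v$. A big neighbor sends exactly $1$ by R2. For an internal $(6,0)$-neighbor $u$, I would consider the two triangular faces $vux_1$ and $vux_2$ incident with the edge $vu$; by Lemma~\ref{lemma-arrive}, charge arrives to $v$ through $(u,x_j)$ whenever $x_j\notin C$. Since $v$ has only the single $C$-neighbor $w$, any apex lying in $C$ must equal $w$, so at most one of $x_1,x_2$ is in $C$ and hence at least one face delivers a unit of charge. Summing over all five non-$C$ neighbors gives total incoming charge at least $5$, whence $c(v)=c_0(v)+(\text{incoming})\ge -5+5=0$.

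The main obstacle, and the only genuine difference from the $(5,0)$ case, is exactly the presence of $w$: the two faces $vwx'$ and $vwx''$ incident with $w$ carry no arriving charge, because R3 forces the apex to be big or an internal $(6,0)$-vertex and hence outside $C$. I would need to verify carefully that this loss never strips a $(6,0)$-neighbor of \emph{both} of its incident faces --- which is precisely the observation that $w$ occupies only one of the two faces sharing any fixed edge $vu$. A point worth double-checking is that Lemma~\ref{lemma-arrive} is invoked only with $x\notin C$, since its proof implicitly relies on the apex being big or $(6,0)$; confining the application to faces whose apex avoids $C$ is what makes the bound go through.
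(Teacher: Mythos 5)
Your proof is correct and takes essentially the same route as the paper: classify the five non-$C$ neighbors as big or internal $(6,0)$-vertices via Corollary~\ref{cor-non44} and Lemma~\ref{lemma-redupath}, then collect $1$ from each big neighbor by R2 and at least $1$ per $(6,0)$-neighbor by Lemma~\ref{lemma-arrive}. The paper's bookkeeping differs only trivially — it first notes that the two neighbors flanking the $C$-neighbor are big (so each $(6,0)$-neighbor yields \emph{two} arriving pairs, giving $c(v)=c_0(v)+b+(10-2b)\ge 0$), whereas your per-neighbor count of one unit each sidesteps that observation; in fact your worry about an apex in $C$ is moot, since a $(6,0)$-neighbor has no neighbor in $C$ at all.
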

\begin{proof}
We have $c_0(v)=-5$.

By Corollary~\ref{cor-non44} and Lemma~\ref{lemma-redupath}, all
neighbors of $v$ except for the one belonging to $C$ are either big or
internal $(6,0)$-vertices.  Let $v_1$, \ldots, $v_6$ be the neighbors
of $v$ in order, where $v_2\in C$.  Since $(6,0)$-vertices have no
neighbor in $C$, both $v_1$ and $v_3$ are big.  Let $\beta\ge 2$ be
the number of big vertices adjacent to $v$; each of them sends $1$
to $v$ by the rule R2.  By Lemma~\ref{lemma-arrive}, charge arrives to
$v$ through $10-2\beta$ pairs.  Since $\beta\le 5$,
$c(v)=c_0(v)+\beta+(10-2\beta)\ge 0$.
\end{proof}

\begin{lemma}\label{lemma-verybig}
Let $(G,K,C)$ be a minimal counterexample.  If $v$ is a big $(a,b)$-vertex, then
$c(v)\ge 8a+7b-60$.
In particular, if $v$ is internal and $v$ is neither a $(6,1)$-vertex nor a $(7,0)$-vertex, then $c(v)\ge 0$.
\end{lemma}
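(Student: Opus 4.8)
The plan is to bound the charge sent out by $v$ during discharging and subtract it from $c_0(v)=10a+5b-60$. Since every rule transfers a nonnegative amount, any charge $v$ receives only helps, so it suffices to prove that the total charge $S$ leaving $v$ satisfies $S\le 2(a-b)$; then $c(v)\ge c_0(v)-S\ge (10a+5b-60)-2(a-b)=8a+7b-60$. Note that $a-b\ge 0$ by Corollary~\ref{cor-non44}, so this target is consistent, and for the ``in particular'' part I will combine the bound with the fact that a big internal vertex has $c_0(v)>0$, i.e. $2a+b\ge 13$.

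The first step is a local accounting of $S$ over the neighbors of $v$ not in $C$. I would charge each unit leaving $v$ to such a neighbor: the charge sent by R1 or R2 is charged to the recipient (an internal $(5,0)$- or $(5,1)$-neighbor), while each R3 departure, which leaves $v=v_1$ through a pair $(v_2,x)$, is charged to the fan-start $v_2$. Crucially, $v_2$ is an internal $(6,0)$-vertex and $x\notin C$, so every R3 departure is charged to a non-$C$ neighbor and nothing is ever charged to a neighbor in $C$. Writing $\phi(u)$ for the amount charged to a non-$C$ neighbor $u$, we get $S=\sum_u\phi(u)$, and I would check $\phi(u)\le 2$ in all cases: a $(5,0)$-neighbor receives exactly $2$ (R1) and, not being a $(6,0)$-vertex, is never a fan-start; a $(5,1)$-neighbor receives exactly $1$ (R2); a $(6,0)$-neighbor receives nothing from R1/R2 but is a fan-start for at most its two faces with $v$, giving at most $2$; every other neighbor (big, $(4,4)$-, or $(5,2)$-vertex, or a vertex of $K$) has $\phi(u)=0$. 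In particular $\phi(u)>0$ forces $u$ to be a $(5,0)$-, $(5,1)$-, or $(6,0)$-vertex.

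The decisive step, and the main obstacle, is improving the naive bound $S\le 2a$ to $S\le 2(a-b)$, which is where the $b$ neighbors in $C$ must pay off. Since $C$ is independent and all faces at $v$ (other than the outer one, if $v\in K$) are triangles, each of the $b$ neighbors of $v$ in $C$ is flanked in the rotation by two non-$C$ neighbors, each of which is thereby adjacent to a vertex of $C$. Let $\lambda(u)$ count how many $C$-neighbors of $v$ are rotation-adjacent to the non-$C$ neighbor $u$; then $\sum_u\lambda(u)=2b$. I claim $2-\phi(u)\ge\lambda(u)$ for every non-$C$ neighbor $u$: if $\lambda(u)\ge 1$ then $u$ is adjacent to a vertex of $C$, so $u$ is neither a $(5,0)$- nor a $(6,0)$-vertex and hence $\phi(u)\le 1$; if $\lambda(u)=2$ then $u$ is adjacent to two distinct vertices of $C$, so $u$ is not a $(5,1)$-vertex either, forcing $\phi(u)=0$. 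Summing gives $\sum_u\phi(u)\le 2a-\sum_u\lambda(u)=2a-2b$, i.e. $S\le 2(a-b)$, and therefore $c(v)\ge 8a+7b-60$.

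Finally, for the ``in particular'' statement I would verify the arithmetic that a big internal $(a,b)$-vertex with $(a,b)\notin\{(6,1),(7,0)\}$ satisfies $8a+7b\ge 60$. Using $8a+7b=4(2a+b)+3b\ge 52+3b$, the claim is immediate once $b\ge 3$, so only $b\in\{0,1,2\}$ need inspection: for $b=0$ bigness gives $a\ge 7$ with sole deficient case $(7,0)$; for $b=1$ it gives $a\ge 6$ with sole exception $(6,1)$; and for $b=2$ it gives $a\ge 6$, whence $8a+7b\ge 62$. Thus $c(v)\ge 8a+7b-60\ge 0$ for every big internal vertex outside the two listed types.
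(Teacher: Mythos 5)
Your proof is correct and follows essentially the same strategy as the paper's: both bound the total charge leaving $v$ by $2(a-b)$ through local accounting in the rotation around $v$, exploiting that R1--R3 can only send charge toward or through internal $(5,\le\!1)$- and $(6,0)$-vertices, none of which can lie in or next to $C$, and then finish with the same arithmetic (both in fact establish the bound $c(v)\ge 8a+7b-60$, confirming that the ``$-6$'' in the statement is a typo). The only difference is bookkeeping: the paper puts weights on the edges of the induced neighborhood cycle (weight $2$ on departure edges, $1$ on edges at a $(5,\le\!1)$-vertex, $0$ on the $2b$ edges meeting $C$), whereas you charge each outgoing unit to a non-$C$ neighbor and subtract the correction term $\lambda(u)$ --- a dual but equivalent scheme.
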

\begin{proof}
By Lemma~\ref{lemma-nosep}, the neighborhood of $v$ in $G$ induces a cycle, which we denote by $Q$.
If $v$ is an internal vertex or $|K|=3$, then the length of $Q$ is $a+b$.
If $v\in K$ and $|K|=2$ then the length of $Q$ is $a+b-1$. Note that if $v\in K$, then $a+b\ge 4$ by Corollary~\ref{cor-outdeg4}.

Let us define a weight $w(e)$ for an edge $e=xy$ of $Q$ as follows.  If charge departs $v$ through at least one of $(x,y)$ and $(y,x)$, then let $w(e)=2$.
If $x$ or $y$ is an internal $(5,\le\!1)$-vertex and neither $x$ nor $y$ belongs to $C$, then let $w(e)=1$.  Otherwise, let $w(e)=0$.
Note that no two internal $(5,\le\!1)$-vertices are adjacent by Lemma~\ref{lemma-redupath}, and that if charge departs $v$ through at least one of $(x,y)$ and $(y,x)$, then neither $x$ nor $y$
is an internal $(5,\le\!1)$-vertex.  Furthermore, if $xyz$ is subpath of $Q$ and $y$ is an internal $(5,0)$-vertex, then $w(xy)=w(yz)=1$.  We conclude that
$\sum_{e\in E(Q)} w(e)$ is an upper bound on the amount of charge sent by $v$.

Note that $w(e)\le 2$ for every $e\in E(Q)$, and $w(e)=0$ if $e$ is incident with a vertex of $C$.
Since $C$ is an independent set, exactly $2b$ edges of $Q$ are incident with a vertex of $C$, and thus $\sum_{e\in E(Q)} w(e)\le 2(a+b-2b)=2(a-b)$.
Therefore, $c(v)\ge c_0(v)-2(a-b)=(10a+5b-60)-2(a-b)=8a+7b-60$.

If $a\ge 8$, then $c(v)\ge 8a-60\ge 4$.  If $a=7$ and $b\ge 1$, then $c(v)\ge 8\cdot 7+7-60=3$.  If $a=6$ and $b\ge 2$, then $c(v)\ge8\cdot 6+7\cdot 2-60=2$. Finally, if $a=5$ and $b\ge 3$, then $c(v)\ge8\cdot 5+7\cdot 3-60=1$.  Hence, if $v$ is an internal big vertex, it follows that $c(v)\ge 0$
unless $a=7$ and $b=0$, or $a=6$ and $b=1$.
\end{proof}

\begin{lemma}\label{lemma-70}
Let $(G,K,C)$ be a minimal counterexample.  If $v$ is an internal $(7,0)$-vertex, then $c(v)\ge 0$.
\end{lemma}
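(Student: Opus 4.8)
For an internal $(7,0)$-vertex $v$ we have $c_0(v)=10\cdot 7-60=10$. The plan is to show that $v$ never sends away more than $10$ units of charge. The only rules that send charge out of $v$ are R1, R2, and R3, and the bound $\sum_{e\in E(Q)}w(e)\le 2(a-b)=14$ from Lemma~\ref{lemma-verybig} is too weak here (it only yields $c(v)\ge 8\cdot 7-60=-4$). So the real work is to improve that bound by $4$: I want to show that the weight $\sum_{e\in E(Q)}w(e)$ sent by a $(7,0)$-vertex is actually at most $10$, where $Q$ is the length-$7$ cycle induced by the neighborhood of $v$.

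**Controlling the weights on $Q$.** Each edge $e=xy$ of $Q$ has $w(e)\in\{0,1,2\}$, with $w(e)=2$ only when charge departs $v$ through $(x,y)$ or $(y,x)$, and $w(e)=1$ only when one endpoint is an internal $(5,\le1)$-vertex. Since $Q$ has odd length $7$, the edges cannot all carry weight $2$ in a way that sums to $14$ without several consecutive heavy edges, and this is exactly where Lemma~\ref{lemma-depart} bites: its first bullet says that if an edge $u_1u_2$ (with $vu_1u_2$ a face) is heavy for $v$, then no charge departs $v$ through the adjacent pair $(u_2,u_3)$ and the next edge $u_3u_4$ is not heavy. In other words, heavy edges of $Q$ cannot be too densely packed: between two heavy edges there must be a gap. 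I would translate this into a counting statement on the cyclic sequence of $7$ edge-weights, using that a weight-$2$ edge forces its neighbors to be lighter, to conclude $\sum_{e}w(e)\le 10$.

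**The main case analysis.** The obstruction to a clean bound is configurations mixing heavy edges (weight $2$) with weight-$1$ edges coming from neighboring $(5,\le1)$-vertices, and the fact that $v$ could also lose charge via R1/R2 to those $(5,\le1)$-neighbors. I would organize the argument by how many heavy edges $Q$ has. If $Q$ has at most $3$ heavy edges, the first bullet of Lemma~\ref{lemma-depart} separates them enough that the remaining four edges contribute at most a controlled amount, keeping the total at $10$; if there are more heavy edges, the separation forced by Lemma~\ref{lemma-depart} (heavy edges not adjacent, and a heavy edge killing the departure through the pair on its far side) is incompatible with fitting them around a $7$-cycle. The delicate subcase is when a weight-$2$ edge sits next to a weight-$1$ edge arising from a $(5,\le1)$-vertex $w$ lying on the far side of a heavy edge; here the first bullet of Lemma~\ref{lemma-depart} identifies that far vertex $w$ as an internal $(5,\le1)$-vertex and pins down the local structure, which I expect lets me absorb that weight-$1$ edge into the accounting rather than double-count it.

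**Expected main obstacle.** The hardest part will be the bookkeeping that simultaneously tracks (i) the weights $w(e)$ on the seven edges of $Q$, (ii) the direct R1/R2 sends to $(5,\le1)$-neighbors, and (iii) the adjacency constraints from Lemma~\ref{lemma-depart}, without overcounting charge that is attributed both to a departing pair and to an incident light edge. I anticipate that the cleanest route is to argue directly that the number of pairs through which charge departs $v$, plus the R1/R2 contributions, is at most $10$, by showing that each departing pair and each $(5,\le1)$-neighbor can be charged to an edge of $Q$ so that the total weight never exceeds $10$; the parity of the $7$-cycle together with the forbidden-adjacency conclusions of Lemma~\ref{lemma-depart} should force the saving of $4$ over the naive bound of $14$.
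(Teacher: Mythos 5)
Your proposal assembles some of the right raw materials (the initial charge $c_0(v)=10$, the first bullet of Lemma~\ref{lemma-depart}, and, in your last paragraph, the correct quantity to bound, namely the number of departing pairs plus the R1/R2 sends), but two of your intermediate claims are false and one essential idea is missing. The goal you set yourself, $\sum_{e\in E(Q)}w(e)\le 10$, cannot be established: the weight function of Lemma~\ref{lemma-verybig} assigns $w(e)=2$ to any edge through which charge departs \emph{at least once}, so a configuration in which each of the seven edges of $Q$ carries a single departure (no edge heavy) has $\sum w(e)=14$ while violating none of the conclusions of Lemma~\ref{lemma-depart}; the weight function is deliberately coarse, and only the finer count of departing pairs can be pushed below $10$. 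Relatedly, your counting premise that ``a weight-$2$ edge forces its neighbors to be lighter'' is wrong: only \emph{heavy} edges (departures through both ordered pairs) trigger the first bullet of Lemma~\ref{lemma-depart}, and adjacent edges may each carry one departure. The parity of the $7$-cycle plays no role anywhere.

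The genuine gap is that your accounting never uses the fact on which the paper's proof turns: by the statement of rule R3, charge departs $v$ through a pair $(x,y)$ only if $x$ is an internal $(6,0)$-vertex and $y$ is big or an internal $(6,0)$-vertex, so an edge of $Q$ incident with an internal $(5,\le\!1)$-vertex carries \emph{no} departures at all. Writing $n_5$ for the number of internal $(5,\le\!1)$-neighbors of $v$ (so $n_5\le 3$ by Lemma~\ref{lemma-redupath}), rule R3 thus acts through at most $7-2n_5$ edges, and the paper organizes the whole proof by $n_5$, not by the number of heavy edges: for $n_5\ge 2$ it closes immediately via $c(v)\ge 10-2n_5-2(7-2n_5)=2(n_5-2)\ge 0$; for $n_5=1$ it additionally needs the \emph{second} bullet of Lemma~\ref{lemma-depart} (a $(5,\le\!1)$-neighbor $u_1$ kills the departure through $(u_2,u_3)$), which you never invoke; for $n_5=0$ the heavy-edge separation bounds the departing pairs by $7\cdot 4/3<10$. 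Without the blocking fact your numbers do not close whenever $n_5\ge 1$: separation alone gives at most $7+2=9$ pairs, and adding R1/R2 sends of $2n_5$ yields up to $11$, $13$, or $15$ units, all exceeding $10$. Once the blocking fact is added, your heavy-edge bookkeeping does work, and in fact without case analysis: the charge sent is at most $2n_5+\bigl((7-2n_5-h)+2h\bigr)=7+h\le 9$, where $h\le 2$ is the number of heavy edges. That trade-off, where each $(5,\le\!1)$-neighbor costs at most $2$ by R1/R2 but removes two edges (worth up to four pairs) from rule R3, is exactly the saving of $4$ you were looking for, and it is the ingredient your sketch lacks.
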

\begin{proof}
Note that $c_0(v)=10$.
Let $v_1v_2\ldots v_7$ denote the cycle induced by the neighbors of $v$, and
let $n_5$ denote the number of internal $(5,\le\!1)$-vertices of $G$ adjacent to $v$.

Since no two internal $(5,\le\!1)$-vertices are adjacent, it follows that $n_5\le 3$.
By rules R1 and R2, the vertex $v$ sends at most $2n_5$ units of charge.  Furthermore, $v$ sends charge over at most $7-2n_5$ edges by rule R3.  If $n_5\ge 2$, then $c(v)\ge c_0(v)-2n_5-2(7-2n_5)=2(n_5-2)\ge 0$. 

If $n_5=1$, then suppose that $v_1$ is the internal $(5,\le\!1)$-vertex.  By Lemma~\ref{lemma-depart}, no charge departs $v$ through $(v_2,v_3)$ or $(v_7,v_6)$.
Also, at most one of the edges $v_3v_4$, $v_4v_5$, and $v_5v_6$ is heavy for $v$.  Therefore, charge departs $v$ through at most $6$ pairs, and
$c(v)\ge c_0(v)-2n_5-6>0$.

Finally, suppose that $n_5=0$.  By Lemma~\ref{lemma-depart}, for $i=1,\ldots, 7$, at most one of the edges $v_iv_{i+1}$, $v_{i+1}v_{i+2}$, $v_{i+2}v_{i+3}$ (with indices taken cyclically)
is heavy.  Therefore, charge departs $v$ through at most $9$ pairs.  Hence, $c(v)\ge c_0(v)-9>0$.
\end{proof}

\begin{lemma}\label{lemma-61}
Let $(G,K,C)$ be a minimal counterexample.  If $v$ is an internal $(6,1)$-vertex, then $c(v)\ge 0$.
\end{lemma}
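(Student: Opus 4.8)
The plan is to prove that $v$ sends out at most $5$ units of charge. This suffices, because $c_0(v)=10\cdot 6+5\cdot 1-60=5$ and $v$ receives nothing, since R1--R3 send charge only to internal $(5,\le1)$-vertices.

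I would first fix notation. By Lemma~\ref{lemma-nosep} the neighbours of $v$ induce a cycle $Q=u_1u_2\ldots u_7$, and I relabel it so that the unique neighbour of $v$ in $C$ is $u_7$. Everything hinges on a boundary observation: the neighbours $u_1$ and $u_6$ of $v$, being adjacent to $u_7\in C$, each have a neighbour in $C$, so neither is a $(6,0)$- or a $(5,0)$-vertex. Consequently no charge departs through a pair containing $u_7$; the corner edges $u_1u_2$ and $u_5u_6$ admit a departing pair in only one orientation (the one whose apex is $u_1$, resp.\ $u_6$), so in particular they are never heavy; and thus every heavy edge lies among $u_2u_3,u_3u_4,u_4u_5$. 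Since the first item of Lemma~\ref{lemma-depart} shows that no two heavy edges lie within distance two of one another, $v$ has \emph{at most one} heavy edge.

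I would then write the charge leaving $v$ as $2A+B+D$, where $A$ and $B$ count the internal $(5,0)$- and $(5,1)$-neighbours of $v$ and $D$ is the number of pairs through which charge departs, and bound each summand. The second item of Lemma~\ref{lemma-depart} shows that every $(5,\le1)$-neighbour zeroes the departures across both of its incident live edges and blocks one orientation of the next edge on either side; since no two $(5,\le1)$-neighbours are adjacent (Lemma~\ref{lemma-redupath}) and they all lie on the path $u_1u_2\ldots u_6$, we have $A+B\le 3$, and a direct tally then settles every case in which $v$ has no heavy edge. When $v$ has a $(5,0)$-neighbour, the third item of Lemma~\ref{lemma-depart} forces each departing pair to wrap around onto such a neighbour, so that $D$ is controlled by $A$ (with Lemma~\ref{lemma-redu61path}, already used inside Lemma~\ref{lemma-depart}, pinning the endpoints).

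The main obstacle is the case with one heavy edge, say $u_3u_4$: the purely local rules still allow $D=6$ (one unit from each corner edge, one from each of $u_2u_3$ and $u_4u_5$, and two from the heavy edge). To defeat this I would argue globally. By the first item of Lemma~\ref{lemma-depart} the far vertex $w$ of the face $u_3u_4w$ is an internal $(5,\le1)$-vertex adjacent to the $(6,0)$-vertices $u_3,u_4$. If charge also departs through the corner pair $(u_2,u_1)$, its underlying R3 path $v,u_2,\ldots,x$ runs through $(6,0)$-vertices to an internal $(5,\le1)$-vertex $x$, and $u_2$ is a $(6,0)$-vertex; deleting $v$ and appending the edges $u_2u_3$ and $u_3w$ produces a path from $x$ to $w$ all of whose internal vertices are $(6,0)$-vertices, which Lemma~\ref{lemma-redupath} forbids. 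Hence a heavy middle edge forces both corner departures to vanish (the mirror argument uses $u_5u_4$ and $u_4w$), bringing $D$ down to at most $4$. I expect these heavy-edge sub-cases --- where one must repeatedly manufacture a forbidden $(5,\le1)$-to-$(5,\le1)$ path by threading an R3 path through the two triangles of the heavy edge --- to be the delicate heart of the proof; the remaining bookkeeping, combining the orientation restrictions from Lemma~\ref{lemma-depart} with these reductions, should yield $2A+B+D\le 5$ in every case.
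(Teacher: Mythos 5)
Your setup agrees with the paper's: the relabelling with the $C$-neighbour at $u_7$, the fact that no pair involving $u_7$ can carry charge, that the corner edges $u_1u_2$ and $u_5u_6$ admit only the orientations $(u_2,u_1)$ and $(u_5,u_6)$ and hence are never heavy, that at most one of $u_2u_3,u_3u_4,u_4u_5$ is heavy, the direct tally when no edge is heavy, and the use of the third item of Lemma~\ref{lemma-depart} together with Lemma~\ref{lemma-redu61path} when $v$ has a $(5,0)$-neighbour (this is exactly the paper's first case). The gap is precisely where you predicted delicacy: the heavy-edge case. Your claim that a heavy $u_3u_4$ kills the corner departure $(u_2,u_1)$ rests on concatenating the R3 path $v,u_2,p_3,\ldots,p_m=x$ (apex $u_1$) with the edges $u_2u_3$ and $u_3w$ and invoking Lemma~\ref{lemma-redupath}. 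That lemma forbids \emph{paths}, so you need $x,p_{m-1},\ldots,p_3,u_2,u_3,w$ to be pairwise distinct. Every coincidence is easily excluded ($u_3$ is not adjacent to the apex $u_1$ while all $p_i$ are; $w$ has the wrong degree type to be an interior fan vertex and is not adjacent to $u_2$) except one: $x=w$. Nothing prevents the fan around $u_1$ from terminating at $w$ itself; then your walk is closed, Lemma~\ref{lemma-redupath} is silent, and the fallback Lemma~\ref{lemma-reducyc} would require the cycle $wp_{m-1}\cdots p_3u_2u_3$ to be \emph{induced}, which you cannot guarantee (chords $u_3p_i$ are not excluded by the facial-triangle argument, again because $u_3$ is not adjacent to $u_1$). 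So the statement you want --- heavy middle edge forces $D\le 4$ by vanishing corner departures --- is not established; indeed the paper never proves it, and its own treatment of this case explicitly allows the corner charge to arrive at $w$.

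The correct conclusion of your concatenation is a dichotomy: either $(u_2,u_1)$ does not fire, or its R3 path ends at $w$. The paper exploits the second horn instead of trying to exclude it. In the heavy-middle-edge case with no $(5,\le 1)$-neighbours it supposes that all six candidate pairs $(u_3,u_4),(u_4,u_3),(u_2,u_1),(u_2,u_3),(u_5,u_4),(u_5,u_6)$ depart, applies Lemma~\ref{lemma-redupath} in the form above (with $w$ as one endpoint of the concatenated path from the start, which sidesteps the $x=w$ loophole) to conclude that \emph{all} of the corresponding R3 paths end at $w$, and then counts: $w$ would be adjacent to $u_1,u_3,u_4,u_6$ and to two further distinct $(6,0)$-vertices from the fans of $(u_2,u_3)$ and $(u_5,u_4)$, i.e.\ to at least six vertices outside $C$, contradicting that $w$ is a $(5,\le 1)$-vertex. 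This gives only $D\le 5$, but that suffices since $2A+B=0$ there; the mixed cases (a heavy edge together with a $(5,1)$-neighbour) then close using the orientation restrictions you already listed. To repair your proof, replace the corner-killing claim by this counting argument.
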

\begin{proof}
Note that $c_0(v)=5$.
Let $Q=v_1v_2\ldots v_7$ denote the cycle induced by the neighbors of $v$, where $v_2\in C$.  

Suppose first that $v$ is adjacent to an internal $(5,0)$-vertex, to which $v$ sends $2$ by the rule R1.  By Lemma~\ref{lemma-redu61path}, $v$ is adjacent only to one internal $(5,0)$-vertex
and no other internal $(5,\le\!1)$-vertex.  Furthermore, by the third part of Lemma~\ref{lemma-depart}, charge departs $v$ through at most two pairs.
Hence, $c(v)\ge c_0(v)-2-2>0$.

Hence, we can assume that $v$ is not adjacent to internal
$(5,0)$-vertices.  Let $n_5$ be the number of internal
$(5,1)$-vertices adjacent to $v$; $v$ sends $1$ to each of them by the
rule R2.  Note that $n_5\le 3$, since no two internal $(5,1)$-vertices
are adjacent by Lemma~\ref{lemma-redupath}.  Since $v_2\in C$, neither
$v_1$ nor $v_3$ is a $(6,0)$-vertex, and thus the edges $v_1v_7$ and
$v_3v_4$ are not heavy for $v$.

Suppose first that $n_5=0$.  If no edge of $Q$ is heavy for $v$, then charge departs $v$ through at most $5$ pairs
and $c(v)\ge c_0(v)-5=0$.  Hence, by symmetry we can assume that $v_4v_5$ or $v_5v_6$ is heavy for $v$.  
Lemma~\ref{lemma-depart} implies that no other edge of $Q$ is heavy for $v$.  Let us distinguish the cases.
\begin{itemize}
\item If $v_4v_5$ is heavy, then Lemma~\ref{lemma-depart} implies that charge does not depart through the pair $(v_4,v_3)$, and it does not depart through the pair $(v_3,v_4)$ since $v_3$ is not a $(6,0)$-vertex.
\item If $v_5v_6$ is heavy, then Lemma~\ref{lemma-depart} implies that the common neighbor $w\neq v$ of $v_5$ and $v_6$ is an internal $(5,\le\!1)$-vertex,
and furthermore, that charge may only depart $v$ through pairs $(v_4,v_5)$, $(v_7,v_6)$, $(v_4,v_3)$, and $(v_7,v_1)$ in addition to $(v_5,v_6)$ and $(v_6,v_5)$.

Suppose that the charge departs $v$ through all these pairs.  By Lemma~\ref{lemma-redupath}, all the charge arrives to $w$.  However, then $w$ is adjacent to
$v_1$, $v_3$, $v_5$, $v_6$, as well as at least two $(6,0)$-vertices of the paths showing that the charge departing through the pairs $(v_4,v_5)$ and $(v_7,v_6)$ arrives to $w$.
This is a contradiction, since $w$ has at most $5$ neighbors not belonging to $C$.
\end{itemize}
In both cases, we conclude that charge departs $v$ through at most $5$ pairs, and thus $c(v)\ge c_0(v)-5=0.$

Suppose now that $n_5=1$.  If neither $v_1$ nor $v_3$ is an internal $(5,1)$-vertex, then $v$ sends charge over at most three edges by the rule R3 and at most one of them is heavy for $v$ by Lemma~\ref{lemma-depart},
and $c(v)\ge c_0(v)-n_5-4=0$.  Hence, by symmetry, we can assume that $v_3$ is an internal $(5,1)$-vertex.  By Lemma~\ref{lemma-depart}, only one of the edges $v_5v_6$ and $v_6v_7$
may be heavy.  If $v_6v_7$ is heavy, then charge does not depart $v$ through $(v_7,v_1)$ or $(v_1,v_7)$, by Lemma~\ref{lemma-depart} and since $v_1$ is not a $(6,0)$-vertex.
If $v_5v_6$ is heavy, then charge does not depart $v$ through $(v_4,v_5)$ or $(v_5,v_4)$ by Lemma~\ref{lemma-depart}.  In either case, charge departs $v$ through at most $4$ pairs, and again $c(v)\ge 0$.

Suppose that $n_5=2$.
Recall that no two internal $(5,\le\!1)$-vertices are adjacent by Lemma~\ref{lemma-redupath}.
If at least one of $v_1$ and $v_3$ is not an internal $(5,1)$-vertex, then $v$ sends charge over at most two edges by rule R3 and neither of them is heavy for $v$ by
Lemma~\ref{lemma-depart}, hence $c(v)\ge c_0(v)-n_5-2>0$.  If both $v_1$ and $v_3$ are internal $(5,1)$-vertices, then only the edge $v_5v_6$ may be heavy for $v$
by Lemma~\ref{lemma-depart}, and if it is heavy, then no charge departs $v$ through $(v_4,v_5)$, $(v_5,v_4)$, $(v_6,v_7)$ and $(v_7,v_6)$.  Hence, charge departs $v$ through at most $3$ pairs
and $c(v)\ge c_0(v)-n_5-3=0$.

Finally, suppose that $n_5=3$.  In this case, Lemma~\ref{lemma-depart} shows that no charge departs $v$, and thus $c(v)=c_0(v)-n_5>0$.
\end{proof}

\begin{proof}[Proof of Theorem~\ref{thm-maingen}]
Suppose for a contradiction that Theorem~\ref{thm-maingen} is false.  Then, there exists a minimal counterexample $(G,K,C)$.
Assign and redistribute charge among its vertices as we described above.  Note that the redistribution of the charge
does not change its total amount, and thus
$$\sum_{v\in V(G)} c(v)=\sum_{v\in V(G)} c_0(v)=-60-20|K|.$$
Recall that $c(v)=c_0(v)=0$ for every $v\in C$.  If $v$ is an internal big vertex, then $c(v)\ge 0$ by Lemmas~\ref{lemma-verybig}, \ref{lemma-70}
and \ref{lemma-61}.  If $v$ is an internal vertex with $c_0(v)$ negative, then by Lemma~\ref{lemma-deg4}, it follows that
$v$ is either a $(5,0)$-vertex, or a $(5,1)$-vertex, and $c(v)\ge 0$ by Lemmas~\ref{lemma-50} and \ref{lemma-51}.
If $v$ is an internal vertex with $c_0(v)=0$ (i.e., $v$ is a $(4,4)$-vertex, or a $(5,2)$-vertex, or a $(6,0)$-vertex),
then $c(v)=c_0(v)=0$.  Therefore,
$$\sum_{v\in V(G)} c(v)\ge \sum_{v\in K} c(v).$$
Consider an $(a,b)$-vertex $v\in K$.  Since $v$ is incident with two edges of the outer face of $G$, we have $a\ge 2$,
and $a+b\ge 4$ by Corollary~\ref{cor-outdeg4}.  By Lemma~\ref{lemma-verybig},
$c(v)\ge 8\cdot 2+7\cdot 2-60=-30$.  Therefore,
$$\sum_{v\in K} c(v)\ge -30|K|.$$
However, since $|K|\le 3$, we have $-30|K|>-60-20|K|$, which is a contradiction.  Therefore, no counterexample to Theorem~\ref{thm-maingen}
exists.
\end{proof}


\end{document}